\documentclass[reqno, 12pt]{amsart}
\usepackage[mathscr]{eucal}
\usepackage[all]{xy}
\usepackage{mathrsfs}
\usepackage{xypic}
\usepackage{amsfonts}
\usepackage{amsmath}
\usepackage{amssymb,enumerate}
\usepackage{latexsym}
\usepackage{tabularx}
\usepackage{graphicx}
\usepackage{pict2e}
\usepackage{tikz}
\usepackage[pagebackref]{hyperref}
\usepackage[text={160mm,240mm},centering]{geometry}
\usepackage{enumitem}
\usepackage{lineno}
\usepackage{color}
\usepackage{fancyhdr}
\usepackage{todonotes}
\usepackage{comment}

\geometry{a4paper}
\input diagxy
\input xy


\newtheorem{thm}{Theorem}[section]

\newtheorem{cor}[thm]{Corollary}

\newtheorem{lem}[thm]{Lemma}
\newtheorem{prop}[thm]{Proposition}

\theoremstyle{definition}
\newtheorem{defn}[thm]{Definition}
\newtheorem{rem}[thm]{Remark}

\newtheorem{ex}[thm]{Example}
\numberwithin{equation}{section}

\newcommand{\kah}{K\"ahler }
\newcommand{\idd}{i\partial\overline{\partial}}

\newcommand{\pa}{\partial}
\newcommand{\ov}{\overline}
\newcommand{\xu}{\sqrt{-1}}

\newcommand{\cX}{\mathcal{X}}

\newcommand{\dbar}{\bar \partial}
\newcommand{\ddbar}{\partial\bar{\partial}}

\newcommand{\mc}{\mathbb{C}}

\pagestyle{fancy}
\fancyhead{}
\fancyhead[CO]{On the direct image of the adjoint big and nef line bundles}
\fancyhead[CE]{\leftmark}

\begin{document}


\title{On the direct image of the adjoint big and nef line bundles}

\author{Yuta Watanabe}
\address{Graduate School of Mathematical Sciences, The University of Tokyo, 3-8-1 Komaba, Meguro-Ku, Tokyo 153-8914, Japan}
\email{wyuta.math@gmail.com}

\author{Yongpan Zou}
\address{Graduate School of Mathematical Sciences, The University of Tokyo, 3-8-1 Komaba, Meguro-Ku, Tokyo 153-8914, Japan}
\email{598000204@qq.com}

\keywords{direct image sheaves, singular Hermitian metrics, $L^2$-estimates}

\date{\today}


\begin{abstract}
We investigate the positivity properties of the direct image $f_{\ast}(K_{X/Y} \otimes L)$ of the adjoint line bundle associated with a big and nef line bundle $L$, under a smooth fibration $f: X\to Y$ between projective varieties. We show that the vector bundle $f_{\ast}(K_{X/Y} \otimes L)$ is big.
\end{abstract}

\maketitle
\setcounter{tocdepth}{1}

\section{Introduction}

For a projective surjective morphism $f: X \to Y$ of complex manifolds with connected fibers, and let $L$ be a line bundle on $X$, we are very interested in the direct image $f_{\ast}(K_{X/Y} \otimes L)$. In general, the positivity of the bundle $L$ induces positivity in the direct image sheaves.

\begin{thm} \cite{Mou97, MT07, Bo09, BLNN23}
Let $f: X\to Y$ be a smooth fibration of smooth projective varieties and denote the relative canonical line bundle by $K_{X/Y}$. For any ample or positive line bundle $L$ on $X$, the direct image $f_{\ast}(K_{X/Y} \otimes L)$ is either zero or an ample vector bundle.
\end{thm}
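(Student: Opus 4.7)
The plan is to realize $E := f_{\ast}(K_{X/Y} \otimes L)$ as a vector bundle carrying a Nakano-positive $L^{2}$-metric, and then invoke the classical fact that Nakano (equivalently Griffiths) positive vector bundles are ample.

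\emph{Step 1: Local freeness.} Since $f$ is smooth, each fiber $X_{y}$ is a smooth projective variety and $L|_{X_{y}}$ is ample. Kodaira--Nakano vanishing on the fiber gives $H^{i}(X_{y}, K_{X_{y}} \otimes L|_{X_{y}}) = 0$ for $i > 0$, so by Grauert's base change theorem $R^{i}f_{\ast}(K_{X/Y} \otimes L) = 0$ for $i > 0$ and $E$ is locally free, with formation commuting with base change. We may assume $E \neq 0$, otherwise the conclusion is vacuous.

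\emph{Step 2: Berndtsson positivity.} I would endow $L$ with a smooth Hermitian metric $h_{L}$ whose curvature $\sqrt{-1}\,\Theta_{h_{L}}(L)$ is everywhere strictly positive on $X$, possible since $L$ is ample. The $L^{2}$-inner products
\[
\|s\|_{L^{2}, y}^{2} = \int_{X_{y}} \{s, s\}_{h_{L}}, \quad s \in E_{y} = H^{0}(X_{y}, K_{X_{y}} \otimes L|_{X_{y}}),
\]
glue into a smooth Hermitian metric $h_{L^{2}}$ on $E$. Berndtsson's curvature theorem for direct images (Bo09) gives that $h_{L^{2}}$ is Nakano semi-positive; combined with the strict positivity of $\sqrt{-1}\,\Theta_{h_{L}}(L)$ on the total space, a refined analysis upgrades this to strict Nakano positivity of $h_{L^{2}}$.

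\emph{Step 3: Ampleness.} Nakano positivity implies Griffiths positivity, and by Griffiths' classical theorem a Griffiths-positive holomorphic vector bundle is ample. Hence $E$ is an ample vector bundle.

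The main obstacle is the upgrade from Nakano semi-positivity to strict Nakano positivity in Step 2. Berndtsson's original statement only delivers semi-positivity from semi-positivity of $h_{L}$; to obtain the strict version one must open up the curvature formula and show that the integral contribution of the horizontal component of $\sqrt{-1}\,\Theta_{h_{L}}(L)$ is nondegenerate on $T_{Y} \otimes E$ at every point $y \in Y$. Concretely one verifies that no nonzero pair $(v, s) \in T_{Y, y} \otimes E_{y}$ produces a vanishing value of $\langle \sqrt{-1}\,\Theta_{h_{L^{2}}}(E)(v, \bar{v}) s, s \rangle_{h_{L^{2}}}$, which reduces to an $L^{2}$-positivity check on $X_{y}$ exploiting the ampleness of $L$. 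An alternative route to bypass this technicality is to use Hartshorne's criterion and work on $\mathbb{P}(E^{*})$ with the tautological quotient, reducing ampleness of $E$ to a line-bundle positivity statement obtained from the same Berndtsson framework applied to the base change $X\times_Y\mathbb{P}(E^*)\to\mathbb{P}(E^*)$.
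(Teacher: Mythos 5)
This theorem is stated in the paper as a quoted result from the literature (\cite{Mou97, MT07, Bo09, BLNN23}) and is not reproved there; the paper only cites the references and, in the surrounding prose, notes that Mourougane--Takayama obtained Griffiths positivity by algebro-geometric methods while Berndtsson obtained Nakano positivity analytically. Your proposal essentially reconstructs the Berndtsson route [Bo09]: equip $L$ with a positively curved metric, take the $L^{2}$-metric on $E=f_{*}(K_{X/Y}\otimes L)$, deduce Nakano (hence Griffiths) positivity, and conclude ampleness via positivity of $\mathcal{O}_{\mathbb{P}(E)}(1)$. Steps 1 and 3 are fine. The one point worth correcting is your framing of Step 2 as requiring a nontrivial ``upgrade'': Berndtsson's theorem already treats the strictly positive case directly --- his curvature formula exhibits $\sqrt{-1}\Theta_{h_{L^{2}}}(E)$ as a sum of a fiber integral of the horizontal part of $\sqrt{-1}\Theta_{h_L}(L)$ (strictly positive once $\sqrt{-1}\Theta_{h_L}(L)>0$ on the total space) plus a non-negative second-fundamental-form term, so strict Nakano positivity is not an extra refinement you have to supply but is part of the cited statement. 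With that caveat, your argument is correct and matches one of the standard proofs of the cited theorem; your ``alternative route'' via $\mathbb{P}(E^{*})$ and Hartshorne's criterion is likewise a legitimate variant, closer in spirit to the Mourougane--Takayama approach, and has the advantage that it reduces to a single line-bundle positivity statement on $\mathbb{P}(E^{*})$ instead of handling vector-bundle curvature directly.
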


In \cite{MT07}, Mourougane and Takayama established, through algebro-geometric methods, that $f_{\ast}(K_{X/Y} \otimes L)$ is Griffiths positive as a vector bundle. Furthermore, in \cite{Bo09}, Berndtsson demonstrated that $f_{\ast}(K_{X/Y} \otimes L)$ possesses Nakano positivity as a vector bundle. Considering the big and nef line bundle instead of the ample line bundle is a natural choice. Recently, Biswas, Laytimi, Nagaraj, and Nahm proved the following theorem.
\begin{thm} \cite[Theorem 1.2]{BLNN23} \label{blnn}
Let $f: X\to Y$ be a smooth fibration of smooth projective varieties. If the line bundle $L$ is nef and $f$-strongly big, then $f_{\ast}(K_{X/Y} \otimes L)$ is also nef and big.
\end{thm}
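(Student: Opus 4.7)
I would prove nefness and bigness separately, deducing nefness from the ample case by a perturbation argument and establishing bigness via a singular-Hermitian-metric construction that exploits the $f$-strongly big hypothesis.

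\emph{Nefness.} Since $L$ is nef, for any ample line bundle $A$ on $X$ and any rational $\epsilon>0$ the $\mathbb{Q}$-line bundle $L+\epsilon A$ is ample; by Mourougane--Takayama and Berndtsson, the direct image $f_{\ast}(K_{X/Y}\otimes m(L+\epsilon A))$ is then Nakano positive for a suitable integer $m$. Rather than pass to the limit at the level of vector bundles, it is cleaner to invoke Berndtsson's plurisubharmonic variation theorem directly: endow $L$ with a (possibly singular) Hermitian metric whose curvature current is semipositive, and observe that the induced fiberwise $L^{2}$-metric on $f_{\ast}(K_{X/Y}\otimes L)$ is Griffiths semipositive in the singular sense of Paun--Takayama/Raufi. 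Nefness of $f_{\ast}(K_{X/Y}\otimes L)$ as a vector bundle then follows from the standard dictionary between such singular Griffiths semipositive metrics and nefness.

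\emph{Bigness.} The $f$-strongly big hypothesis enters decisively here: it ensures that $L$ carries a singular Hermitian metric whose curvature current is strictly positive on a nonempty open subset $U\subset X$ which dominates $Y$ under $f$. Plugging this metric into the Berndtsson construction yields a singular Hermitian metric on $E:=f_{\ast}(K_{X/Y}\otimes L)$ that is globally Griffiths semipositive and strictly positive on the nonempty open set $f(U)\subset Y$. By the characterization of bigness for vector bundles via singular Hermitian metrics (a Boucksom-type criterion for $\mathcal{O}_{\mathbb{P}(E^{\ast})}(1)$, or a Cao--Paun type pointwise argument), this combination of global semipositivity and strict positivity on an open subset forces $E$ to be big.

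\emph{Main obstacle.} The delicate step is transferring strict positivity of the metric on $L$ to the $L^{2}$-metric on $E$. This requires an Ohsawa--Takegoshi-type $L^{2}$-extension result to produce sections of $K_{X_y}\otimes L|_{X_y}$ with uniformly controlled fiberwise $L^{2}$-norm, together with careful tracking of the multiplier ideals of the singular metric so that they do not kill the constructed sections either on $X$ or on $Y$. The precise form of the $f$-strongly big condition is exactly what is needed to keep these multiplier ideals from degenerating.
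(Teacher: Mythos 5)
The paper does not prove Theorem~\ref{blnn}; it is cited from \cite{BLNN23}, where it is established by algebro-geometric methods. What the paper actually proves is the strictly stronger Theorem~\ref{main} (replacing ``$f$-strongly big'' by ``big''), by a complex-analytic argument. Your sketch is in the spirit of the latter, so I will compare against that.

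Your nefness argument is fine and matches the paper's input (Theorem~\ref{semi-po}, quoting \cite{PT,HPS18}): a singular semipositive metric on $L$ induces a Griffiths semipositive singular metric on $\mathcal F=f_*(K_{X/Y}\otimes L)$, which gives nefness of $\mathcal O_{\mathbb P(\mathcal F)}(1)$.

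For bigness, however, your formulation contains a real gap. You ask for a singular metric on $L$ whose curvature is strictly positive on an open $U\subset X$ that merely \emph{dominates} $Y$. That is not enough: to make the fiberwise $L^2$-metric $h_{\mathcal F}$ smooth and positively curved over a point $y\in f(U)$, you need the metric on $L$ to be smooth and positive on the \emph{entire} fiber $X_y$, not just on $X_y\cap U$. The precise reason the $f$-strongly big hypothesis makes life easy — and the point your sketch misses — is that it produces a decomposition $dL = A + f^*D$ with $A$ ample and $D$ an SNC divisor on $Y$. The singular locus of the resulting metric on $L$ is then $f^{-1}(\mathrm{Supp}\,D)$, a union of whole fibers, so one gets a smooth, Nakano positive $h_{\mathcal F}$ on the Zariski open set $V = Y\setminus\mathrm{Supp}\,D$. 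In the general nef-and-big case there is no such ``vertical'' control on the singular divisor, which is exactly why the paper must pass through a log resolution and the klt-singularity analysis in Set-up~\ref{setup2}, with Nakano positivity established via the Deng--Ning--Wang--Zhou optimal $L^2$-estimate criterion (Theorems~\ref{thm:theta-nakano text_intr}, \ref{Thm2}) rather than the Ohsawa--Takegoshi extension theorem you identify as the key tool (OT enters only to ensure $\mathcal F$ is locally free and that $\mathcal F_t = H^0(X_t,K_{X_t}+L_t)$).

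Your ``Boucksom-type criterion'' is also left vague. The paper's concrete route is: $\mathcal O_{\mathbb P(\mathcal F)}(1)$ is nef, so bigness is equivalent to positive top self-intersection; one then bounds $\mathrm{Vol}(\mathcal O_{\mathbb P(\mathcal F)}(1))$ from below by $\int \omega_{\mathrm{ac}}^m$ (Boucksom's volume formula), and the absolutely continuous part of the curvature is strictly positive over $\pi^{-1}(V)$, giving a positive integral. If you want your sketch to become a proof, you should spell out that mechanism; merely asserting that global semipositivity plus positivity on an open set ``forces $E$ to be big'' elides the step that makes the argument work.
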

Here a line bundle $L$ on $X$ is said to be $f$-strongly big if there is an effective divisor $D$ on $Y$ with simple normal crossing support such that $L^{d}\otimes f^{*}(\mathcal{O}_Y(-D))$ is ample for some integer $d > 0$. At the end of their paper, they conjecture that the conditions of $L$ being nef and big are sufficient to draw the same conclusion. In this note, we confirm their conjecture and establish the following result.


\begin{thm} [=Theorem \ref{main}] \label{mainthm}
Let $f: X\to Y$ be a smooth fibration of smooth projective varieties. If the line bundle $L$ is big and nef, then $f_{\ast}(K_{X/Y} \otimes L)$ is also nef and $V$-big.
\end{thm}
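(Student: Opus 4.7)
The plan is to combine Berndtsson--P\u{a}un type positivity of direct images with the analytic characterization of nefness and $V$-bigness via singular Hermitian metrics. For the \emph{nef} part, I would use a standard approximation: fix an ample $A$ on $X$ with K\"ahler form $\omega\in c_1(A)$. Since $L$ is nef, it carries smooth Hermitian metrics $h_\varepsilon$ with $i\Theta_{h_\varepsilon}(L)\geq -\varepsilon\omega$ for every $\varepsilon>0$. Berndtsson's curvature formula then produces smooth Hermitian metrics on $f_{\ast}(K_{X/Y}\otimes L)$ whose negative part tends to zero with $\varepsilon$, which is precisely the analytic criterion for nefness of a holomorphic vector bundle.

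For the $V$-\emph{bigness} part, the key input is Kodaira's lemma: since $L$ is big and nef, there exist an integer $m\geq 1$, an ample line bundle $A_0$, and an effective divisor $E$ on $X$ such that $mL\sim A_0+E$. Combining a smooth positive metric on $A_0$ with the canonical singular metric on $\mathcal{O}_X(E)$ whose curvature is the current of integration along $E$ produces a singular Hermitian metric $h_L$ on $L$ with
\[
i\Theta_{h_L}(L)\geq \tfrac{1}{m}\,\omega_{A_0}
\]
in the sense of currents, smooth on $X\setminus\mathrm{supp}(E)$. Because $f$ is a smooth fibration, a general fibre $X_y$ is not contained in $\mathrm{supp}(E)$, so the fibrewise $L^2$-norms with respect to $h_L$ are finite on a Zariski open subset $Y_0\subseteq Y$. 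By the singular-metric version of Berndtsson's theorem (Berndtsson--P\u{a}un and subsequent refinements), the induced $L^2$-metric on $f_{\ast}(K_{X/Y}\otimes L)$ is a singular Hermitian metric with Griffiths semi-positive curvature.

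The decisive step is to promote this semi-positivity to \emph{strict} Griffiths positivity $i\Theta\geq c\,\omega_Y\otimes\mathrm{Id}$ on $Y_0$, which is the analytic hypothesis underlying $V$-bigness. I would carry this out by a fibre-integration/Ohsawa--Takegoshi argument: the pointwise lower bound $\tfrac{1}{m}\omega_{A_0}$ on $i\Theta_{h_L}(L)$ away from $E$, combined with an $L^2$-extension producing sufficiently many sections on each fibre, should translate into a uniform lower bound on the Hessian of $-\log\|\sigma\|^2$ for local sections $\sigma$ of the direct image over $Y_0$.

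The main obstacle is precisely this strict positivity step. In the ample case or the $f$-strongly big case of Theorem \ref{blnn}, the strict lower bound is either uniform on $X$ or comes from a divisor on the base, so the propagation to the direct image is essentially automatic. Here strict positivity of $h_L$ is lost along $\mathrm{supp}(E)$, which is an arbitrary subvariety of the \emph{total space} possibly interacting non-trivially with $f$; the technical heart of the proof should therefore lie in controlling the contribution of $E$ when averaging over fibres, using the smoothness of $f$ and $L^2$-extension estimates to ensure that the loss of positivity along $E$ does not destroy the strict lower bound on $Y_0$.
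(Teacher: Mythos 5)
Your outline captures the right skeleton — Kodaira's lemma to split $mL$ into an ample part plus an effective divisor, a singular metric $h_L$ built from that decomposition, and the $L^2$ metric on the direct image — and you correctly identify that the whole difficulty lives in upgrading Griffiths semi-positivity to some form of strict positivity away from a proper closed subset of $Y$. But the proposal leaves that step as a wish (``should translate into a uniform lower bound'') rather than a proof, and the concrete ingredients the paper uses to close it are absent.

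Specifically, there are three missing mechanisms. First, the paper does \emph{not} simply take $mL = A_0 + E$ and put the full current $[E]$ into the metric; it rescales by a further large integer $d$ (writing $mdL = A + N + (d-1)(A+N)$ and tuning the weight) so that the resulting $h_L$ has \emph{trivial} multiplier ideal $\mathcal{I}(h_L)=\mathcal{O}_X$. Without this, $f_*(K_{X/Y}\otimes L \otimes \mathcal{I}(h_L))$ would be a proper subsheaf and you would not be proving anything about $f_*(K_{X/Y}\otimes L)$ itself. Second, to propagate the strict curvature lower bound from $L$ to the direct image, the paper does not use a fibrewise Ohsawa--Takegoshi extension but rather the Deng--Ning--Wang--Zhou characterization of Nakano positivity via the optimal $L^2$-estimate condition, which requires a genuine $\bar\partial$-solvability statement on the total space with singular weight (the paper's Theorem \ref{equ}, proved by a Richberg-type regularization and an exhaustion argument). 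This is the technical core, and it also requires a log resolution of $(X,N,f)$ to bring the singularities of $h_L$ into a klt normal form transverse to the fibres — another step your proposal omits. Third, even after establishing smooth Nakano positivity of $(\mathcal{F},h_{\mathcal{F}})$ on a Zariski open subset $U\subset Y$, the conclusion is not immediate: the paper passes to $F=\mathcal{O}_{\mathbb{P}(\mathcal{F})}(1)$, takes the absolutely continuous part $\omega_{ac}$ of the curvature current of the induced pseudo-effective metric $h_1$, invokes Boucksom's formula $\mathrm{Vol}(F)\geq\int \omega_{ac}^m>0$ to get $L$-bigness, and then separately establishes $V$-bigness via an augmented base-locus inclusion $\mathbb{B}_+(\mathcal{F})\subset Y\setminus U$ using Nadel vanishing. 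Your claim that strict Griffiths positivity on $Y_0$ ``is the analytic hypothesis underlying $V$-bigness'' short-circuits this last stage; $V$-bigness is a statement about $\mathbb{B}_+$, and some argument is needed to show that the open set of positivity forces $\mathbb{B}_+(\mathcal{F})\neq Y$. In short: right strategy, but the decisive analytic and algebraic steps are not supplied.
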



The Theorem \ref{blnn} above, obtained from algebraic geometry, is generalized by using complex analytic methods here. It is already known that the direct image sheaf $f_{\ast}(K_{X/Y} \otimes L)$ is locally free, i.e., a vector bundle, and nef; see \cite{Mou97} for details. Let $\mathcal{F}:= f_{\ast}(K_{X/Y} \otimes L)$, and denote $\mathcal{O}_{\mathbb{P}(\mathcal{F})}(1)$ as the tautology bundle on $\mathbb{P}(\mathcal{F})$ with respect to $\mathcal{F}$. The nef and big (i.e. $L$-big) properties of $\mathcal{F}$ correspond to the nef and big properties of the line bundle $\mathcal{O}_{\mathbb{P}(\mathcal{F})}(1)$. In this article, we investigate the bigness of $\mathcal{F}= f_{\ast}(K_{X/Y} \otimes L)$. 

As a direct application, combined with the result of M. Iwai in \cite[Theorem $1.2$]{Iwa21}, which enables the extension of vanishing theorems to vector bundles with high-rank multiplier ideal sheaves, we can establish K\"ollar--Ohsawa type vanishing theorems.
\begin{cor} \label{ko-oh}
    Let $f: X\to Y$ be a proper holomorphic surjective morphism of smooth projective varieties with connected fibers. 
    If the holomorphic line bundle $L$ on $X$ is nef and big, then 
we have the following cohomology vanishing
    $$ H^q(Y, f_*(K_X\otimes L)) = 0
    $$
   for any integers $q\geq 1$. Here, $f_{\ast}(K_{X/Y} \otimes L)$ has a induced 
singular Hermitian metric $H$ and there exists a proper analytic subset $Z$ such that $H$ is smooth and Nakano positive on $X\setminus Z$.
\end{cor}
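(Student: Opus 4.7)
The strategy is to combine the construction of a singular Hermitian metric on $\mathcal{F} := f_{*}(K_{X/Y}\otimes L)$ that underlies Theorem~\ref{mainthm} with the Nadel--Nakano-type vanishing of Iwai~\cite{Iwa21}.

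First, I would perform a reduction via the projection formula. Since $K_X = K_{X/Y}\otimes f^{*}K_Y$, one has
\[
f_{*}(K_X\otimes L) \;\cong\; K_Y\otimes f_{*}(K_{X/Y}\otimes L) \;=\; K_Y\otimes \mathcal{F},
\]
so it suffices to prove $H^q(Y, K_Y\otimes \mathcal{F}) = 0$ for every $q\geq 1$.

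Second, I would equip $\mathcal{F}$ with the $L^2$ singular Hermitian metric $H$ produced in the proof of Theorem~\ref{mainthm}. Because $L$ is big and nef, Demailly's approximation yields a singular Hermitian metric $h_L$ on $L$ with analytic singularities whose curvature satisfies $\sqrt{-1}\,\Theta_{h_L}(L) \geq \varepsilon\,\omega_X$ on $X\setminus \mathrm{Sing}(h_L)$ for some K\"ahler form $\omega_X$ on $X$ and some $\varepsilon>0$. Setting $Z := f(\mathrm{Sing}(h_L))$, a proper analytic subset of $Y$, Berndtsson's curvature formula for $L^2$ direct images shows that $H$ is smooth and Nakano semi-positive on $Y\setminus Z$; the fiberwise strict positivity of $h_L$, combined with the $V$-bigness built into the proof of the main theorem, promotes this to strict Nakano positivity on $Y\setminus Z$.

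Third, with $(\mathcal{F},H)$ thus prepared, the hypotheses of \cite[Theorem~1.2]{Iwa21} are in place: $(\mathcal{F},H)$ is a singularly metrized vector bundle that is strictly Nakano positive and smooth off the proper analytic subset $Z\subset Y$. Applying that theorem yields $H^q(Y, K_Y\otimes\mathcal{F}) = 0$ for all $q\geq 1$, and combining this with the first step gives the desired vanishing $H^q(Y, f_{*}(K_X\otimes L))=0$. The main obstacle I expect is to verify that $H$ inherits \emph{strict} (rather than merely semi-)Nakano positivity in the precise form required by \cite{Iwa21}: Berndtsson's formula delivers semi-positivity directly, and the upgrade to a uniform lower bound of the form $\varepsilon'\,\omega_Y\otimes \mathrm{Id}_{\mathcal{F}}$ on $Y\setminus Z$ relies on propagating the fiberwise strict positivity of $h_L$ through the fiber integration, which is exactly the mechanism responsible for the $V$-bigness established in Theorem~\ref{mainthm}.
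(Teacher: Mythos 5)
Your outline gets the high-level architecture right — reduce by the projection formula, equip $\mathcal{F}=f_*(K_{X/Y}\otimes L)$ with the $L^2$ metric from the main theorem, and invoke Iwai's singular Nadel--Nakano vanishing — but there is a genuine gap that the paper's proof spends most of its effort addressing. Iwai's \cite[Theorem 1.2]{Iwa21} does \emph{not} directly give $H^q(Y,K_Y\otimes\mathcal{F})=0$: it gives $H^q(Y,K_Y\otimes\mathcal{E}(\mathcal{F},h_{\mathcal{F}}))=0$, where $\mathcal{E}(\mathcal{F},h_{\mathcal{F}})$ is the $L^2$-subsheaf (the vector-bundle analogue of a multiplier ideal sheaf). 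To conclude the stated vanishing you must prove $\mathcal{E}(\mathcal{F},h_{\mathcal{F}})=\mathcal{O}_Y(\mathcal{F})$, i.e.\ that every local holomorphic section of $\mathcal{F}$ is $L^2$ with respect to $h_{\mathcal{F}}$. This is not automatic, and your plan never addresses it. The paper handles it by (i) choosing $h_L=e^{-\phi_L}$ with Lelong numbers $\nu(\phi_L,x)<1$ everywhere, which nef and big permits via Demailly's regularization; (ii) invoking Skoda's theorem to get local integrability of $e^{-\phi_L}$ on $X$; and (iii) applying Fubini to push that integrability through fiber integration, using compactness of $X_t$ to bound the holomorphic section. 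Without some version of this argument your application of Iwai's theorem proves less than the Corollary claims.

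A secondary issue is your choice $Z := f(\mathrm{Sing}(h_L))$: since $\mathrm{Sing}(h_L)$ is a divisor in $X$ (the effective part $N$ from the Kodaira lemma), its image under $f$ may well be all of $Y$, which would vacate the hypothesis of Iwai's theorem. The point the paper relies on (via Lemma~\ref{loc}, using a log resolution to get SNC support of $N$ intersecting the fibers transversally over a Zariski open $U\subset Y$) is that the induced metric $h_{\mathcal{F}}$ remains \emph{smooth} on $U$ even where $h_L$ is singular along the fiber, because the fiberwise $L^2$-integrals $\int_{X_t}|u|^2e^{-\phi_L}$ vary smoothly with $t$ when the singularities are klt and transversal. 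So the correct proper analytic subset is $Y\setminus U$, not the image of the singular locus. Also, for the Nakano positivity on $U$ the paper uses its optimal-$L^2$-estimate characterization (Theorem~\ref{Thm2}, following Deng--Ning--Wang--Zhou), rather than Berndtsson's curvature formula, precisely because $h_L$ is still singular along the fibers over $U$; your plan's appeal to Berndtsson's formula plus an unspecified "promotion via $V$-bigness" would need to be replaced by that mechanism.
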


\noindent\textbf{Acknowledgement}: The authors thank their advisor Professor Shigeharu Takayama for his guidance, discussion, and warm encouragement.

\section{Positivity in complex geometry}
In this section, we begin by introducing fundamental definitions and results in complex geometry and algebraic geometry. Unless specified otherwise, $X$ denotes a complex manifold of dimension $n$. Our primary reference for these foundational concepts is \cite{Dem12}.

Let $(E, h)$ be a holomorphic vector bundle of $\mathrm{rank}\,r$ on $X$ with smooth Hermitian metric $h$. Corresponding to the metric $h$, there exists the unique Chern connection $D^h$, which can be split in a unique way as a sum of a $(1,0)$ and a $(0,1)$ connection, i.e., $D^h=D'^h + D''^h$.
Furthermore, the $(0,1)$ part of the Chern connection $D''^h =\dbar$. The curvature form is defined to be $\Theta_{(E,h)} := (D^h)^2$.
On a coordinate patch $\Omega \subset X$ with complex coordinate $(z_1,\cdots,z_n)$, the Chern curvature tensor $\Theta_{(E,h)}$ is written as 
$$
\Theta_{(E,h)}=\sum_{1\leq j,k\leq n}\Theta^h_{jk}dz_j\wedge d\overline{z}_k,
$$
where the coefficients are written as $\Theta^h_{jk}=[D'^h_{z_j},\overline{\partial}_{z_k}]$ and $\overline{\partial}_{z_j}=\partial/\partial \overline{z}_j$.
Putting $(e_1,\cdots,e_r)$ to be an orthonormal frame of $E$ with respect to $h$, we can write 
$$
\Theta_{(E,h)} = \sum_{1\leq j,k\leq n, 1\leq \lambda,\mu \leq r} c_{jk\lambda\mu} dz_j\wedge d\ov{z}_k \otimes e^{\ast}_{\lambda} \otimes e_{\mu} , \quad c_{jk\mu\lambda}=\ov{c}_{jk\lambda\mu}.
$$
When $r=1$, the line bundle case $(E, h=e^{-\phi})$, the formula would be easier, we have $\xu \Theta_{(E,h)} = \xu \ddbar \phi$. Even if $\phi$ is locally defined, it is well-defined.

\begin{defn} [Positive vector bundle]
Let $E$ be a holomorphic vector bundle $E$. We say that a smooth Hermitian metric $h$ on $E$ is 
\begin{itemize}
        \item $\it{Griffiths ~positive}$ if for any $x\in X$, $0\ne\xi\in T_{X,x}\cong\mathbb{C}^n$ and any $0\ne s\in E_x$, we have
        \begin{align*}
            \Theta_{(E,h)}(u\otimes\xi)=\sum(\Theta^h_{jk}u,u)_h\xi_j\overline{\xi}_k>0.
        \end{align*}
        \item $\it{Nakano ~positive}$ if $\Theta_{E,h}$ is positive definite as a Hermitian form on $T_X\otimes E$,
        i.e. for any $x\in X$ and any $0\ne u\in T_X\otimes E$, we have 
        \begin{align*}
            \Theta_{(E,h)}(u)=\sum(\Theta^h_{jk}u_j,u_k)_h>0,
        \end{align*}
        where $u=\sum u_j\otimes e_j\in T_{X,x}\otimes E_x$ for an orthonormal basis $(e_1,\cdots,e_r)$ of $E$.
\end{itemize}
It is clear that Nakano positivity implies Griffiths positivity and that both concepts coincide if $r=1$. In the case of a line bundle, $E$ is merely said to be positive (resp. semi-positive).
\end{defn}

\noindent Next, we consider singular Hermitian metrics and its positivity. We'll start by considering the case of line bundles.

\begin{defn} [Singular metric and curvature current on line bundles] \label{smcc}
Let $(F,h)$ be a holomorphic line bundle on complex manifold $X$ endowed with possible \textit{singular Hermitian metric} $h$. For any given trivialization $\theta : F|_{\Omega} \simeq \Omega \times \mathbb{C}$ by
$$ | \xi |^2_h = |\theta(\xi)|^2 e^{-\phi(x)},  \quad x \in \Omega, \xi \in F_x,
$$
where $\phi \in L^1_{loc}(\Omega)$ is a locally integrable function, called the weight of the metric. 
We say that a singular Hermitian metric $h$ is \textit{singular semi-positive} (resp. \textit{singular positive}) if the weight of $h$ for any trivializations coincides with some (resp. strictly) plurisubharmonic (psh for short) function almost everywhere.
\end{defn}

\noindent The curvature $\sqrt{-1} \Theta_{(F,h)}$ of $h$ is defined by
$$  \sqrt{-1} \Theta_{(F,h)} = \sqrt{-1} \partial \overline{\partial} \phi.
$$
The Levi form $\sqrt{-1}\pa\dbar \phi$ is taken in the sense of distributions and thus the curvature is a $(1,1)$-current but not always a smooth $(1,1)$-form. It is globally defined on $X$ and independent of the choice of trivializations.


\begin{defn} [Multiplier ideal sheaves]
Let $\phi$ be a quasi-plurisubharmonic (quasi-psh) function on a complex manifold $X$; that is, locally, it is the sum of a psh function and a smooth function, the \textit{multiplier ideal sheaf} $\mathcal{I}(\phi) \subset \mathcal{O}_X$ is defined by
$$ \Gamma(U, \mathcal{I}(\phi)) = \{f \in \mathcal{O}_X(U) :\quad |f|^2e^{-\phi} \in L^1_{loc}(U) \}
$$
for every open set $U \subset X$. For a line bundle $(F, h)$, with the local weight of the metric $h$ denoted by $\phi$, we interchangeably denote the multiplier ideal sheaf as $\mathcal{I}(\phi)$ or $\mathcal{I}(h)$.
\end{defn}

\noindent We already know the following characterization of the algebraic geometric positivity of line bundles

\begin{thm}\cite[Chapter $6$]{Dem10}\label{metricdes}
Let $X$ be a projective manifold equipped with a \kah metric $\omega$. Then we have that a holomorphic line bundle $L$ is
    \begin{itemize}
        \item nef if and only if for any $\varepsilon>0$ there exists a smooth Hermitian metric $h_\varepsilon$ on $L$ such that $i\Theta_{(L,h_\varepsilon)}\geq-\varepsilon\omega$,
        \item big if and only if there exists a singular Hermitian metric $h$ on $L$ such that $i\Theta_{(L,h)}\geq\varepsilon\omega$,
        for some $\varepsilon>0$.
    \end{itemize}
\end{thm}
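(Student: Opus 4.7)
The plan is to handle each equivalence by splitting it into its two directions, a total of four implications. The ``metric $\Rightarrow$ positivity'' direction for nef is a direct integration argument, while the reverse direction for nef uses Kleiman's criterion. For big, the ``big $\Rightarrow$ metric'' direction is handled by Kodaira's lemma, whereas ``metric $\Rightarrow$ big'' is the analytically nontrivial implication and will be the main obstacle.

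For the nef part, $(\Leftarrow)$ is immediate: given $h_\varepsilon$ with $i\Theta_{(L,h_\varepsilon)} \geq -\varepsilon \omega$, for any irreducible curve $C \subset X$,
\begin{equation*}
L\cdot C = \int_C c_1(L,h_\varepsilon) \geq -\varepsilon \int_C \omega,
\end{equation*}
and letting $\varepsilon \to 0$ gives $L \cdot C \geq 0$. For $(\Rightarrow)$, I would invoke the standard consequence of Kleiman's criterion: if $L$ is nef and $A$ is any fixed ample line bundle, then $mL + A$ is ample for every $m \geq 1$, since the nef cone is the closure of the ample cone in $N^1(X)_{\mathbb{R}}$. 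Fix a smooth metric $h_A$ on $A$ with $\omega \leq i\Theta_{(A,h_A)} \leq C\omega$ on the compact $X$, and for each $m$ pick a smooth positively curved metric $H_m$ on $mL + A$. Set $h_m := (H_m \otimes h_A^{-1})^{1/m}$ on $L$; then
\begin{equation*}
i\Theta_{(L,h_m)} = \tfrac{1}{m}\bigl(i\Theta_{(mL+A,H_m)} - i\Theta_{(A,h_A)}\bigr) \geq -\tfrac{C}{m}\omega,
\end{equation*}
and $m > C/\varepsilon$ produces the required metric.

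For the big part, $(\Rightarrow)$ rests on Kodaira's lemma, which for $L$ big supplies an integer $m > 0$, an ample line bundle $A$, and an effective divisor $E$ with $mL \cong A + \mathcal{O}_X(E)$. Let $s_E$ be the canonical section cutting out $E$, and take the singular metric $1/|s_E|^2$ on $\mathcal{O}_X(E)$, whose curvature is the positive current of integration $[E] \geq 0$. Combining with a smooth metric $h_A$ on $A$ of curvature $\geq \omega$ yields a singular metric on $mL$ with curvature $\geq \omega$, and its $m$-th root is a singular Hermitian metric on $L$ with curvature $\geq \tfrac{1}{m}\omega$; one then takes $\varepsilon = 1/m$.

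The converse $(\Leftarrow)$ for big is the analytically substantive step and is the main obstacle. I would follow Demailly's strategy in two stages. First, apply Demailly's regularization theorem to approximate the given strictly positively curved singular metric $h$ by a sequence of (almost) smooth metrics $h_k$ with $i\Theta_{(L,h_k)} \geq (\varepsilon - \delta_k)\omega$ off controlled analytic subsets, where $\delta_k \to 0$, at the cost of small negative contributions supported on proper analytic sets that do not affect top-degree integrals. Second, apply the holomorphic Morse inequalities to deduce
\begin{equation*}
\liminf_{k \to \infty} \frac{n!}{k^n}\, h^0(X, kL) \geq \int_X (i\Theta_{(L,h)})^n \geq \varepsilon^n \int_X \omega^n > 0,
\end{equation*}
so $h^0(X, kL)$ grows like $k^n$ and $L$ is big. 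A self-contained alternative that bypasses the Morse inequalities is to apply H\"ormander's $L^2$-estimate to $\bar\partial$ with the weight $h^k$ (together with a smooth correction for the regularity required by the estimate) and construct $\gtrsim k^n$ independent global sections of $kL$ by cutting off local peak sections at a well-spaced family of regular points. In either route the essential technical input — well beyond formal curvature manipulations — is Demailly's approximation of singular strictly positive metrics by smooth ones with controlled curvature loss.
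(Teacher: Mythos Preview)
The paper does not prove this theorem; it is quoted verbatim from Demailly's lecture notes \cite[Chapter~6]{Dem10} and no argument is given in the paper itself. So there is no ``paper's own proof'' to compare your proposal against.

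That said, your outline is essentially the standard proof one finds in Demailly's text. The nef equivalence is handled exactly as you describe (curve integration for one direction, Kleiman plus $mL+A$ ample for the other). For big, Kodaira's lemma gives the forward direction, and the converse is indeed the nontrivial analytic step; Demailly's route is precisely regularization followed by holomorphic Morse inequalities (or equivalently Bonavero's singular Morse inequalities, which absorb the regularization). One small imprecision: your displayed Morse inequality
\[
\liminf_{k\to\infty}\frac{n!}{k^n}\,h^0(X,kL)\;\geq\;\int_X\bigl(i\Theta_{(L,h)}\bigr)^n
\]
is not literally what the Morse inequalities give for a singular metric; one should either pass through the absolutely continuous part $(i\Theta_{(L,h)})_{ac}^n$, or first regularize to a smooth metric with curvature $\geq(\varepsilon-\delta)\omega$ and then apply the smooth Morse inequalities on the open set where the approximation is positive. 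The conclusion is unaffected, but the intermediate object needs to be stated carefully.
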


Clearly, singular semi-positivity and singular positivity coincide with pseudo-effective and big, respectively, on projective manifolds.

\begin{lem} [Kodaira lemma] \label{kod}
    Let $L$ be a big line bundle, for any ample integer divisor $A$ on $X$, there exists a positive integer $m>0$ and an effective divisor $E$ on $X$ such that $m\cdot L = A + E$.
\end{lem}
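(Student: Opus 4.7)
The plan is to use the standard algebraic characterization of bigness together with a restriction exact sequence argument. Concretely, bigness of $L$ on the $n$-dimensional projective manifold $X$ provides constants $c > 0$ and $m_0 \in \mathbb{N}$ with $h^0(X, mL) \geq c \cdot m^n$ for all $m \geq m_0$, while sections of any line bundle on a projective variety of dimension $d$ grow at most as $O(m^d)$. Comparing these via the restriction sequence to a suitable effective divisor in the class of a large multiple of $A$ should directly produce a nontrivial section of $mL - A$.

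More precisely, since $A$ is ample, I would choose $k \in \mathbb{N}$ large enough that both $(k-1)A$ and $kA$ are very ample, and pick a smooth effective divisor $D \in |kA|$ using Bertini. The restriction sequence
$$
0 \to \mathcal{O}_X(mL - D) \to \mathcal{O}_X(mL) \to \mathcal{O}_D(mL|_D) \to 0
$$
gives the estimate $h^0(X, mL - D) \geq h^0(X, mL) - h^0(D, mL|_D)$. Since $\dim D = n-1$, we have $h^0(D, mL|_D) = O(m^{n-1})$, while $h^0(X, mL) \geq c m^n$, so for $m$ sufficiently large the right-hand side is strictly positive. This yields an effective divisor $E'$ with $mL \sim kA + E'$, and rewriting as $mL \sim A + ((k-1)A + E')$, the very ampleness of $(k-1)A$ lets us choose an effective $D' \in |(k-1)A|$ and set $E := D' + E' \geq 0$, so that $mL \sim A + E$ as desired.

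The main obstacle is justifying the two growth estimates. The lower bound $h^0(X, mL) \geq c m^n$ for big $L$ is classical, being equivalent to the Iitaka-dimension definition of bigness; in the analytic framework of Theorem \ref{metricdes} it can be recovered from Demailly's approximation together with Nadel-type $L^2$-estimates applied to the strictly positive singular metric on $L$. The upper bound $h^0(D, mL|_D) = O(m^{n-1})$ reduces to the standard fact that sections of any line bundle on an irreducible projective variety of dimension $d$ grow at most polynomially of degree $d$, which follows by writing the line bundle as a difference of very ample divisors and invoking the asymptotic Riemann--Roch bound on the ample side. With these two inputs in hand, the remainder of the proof is bookkeeping.
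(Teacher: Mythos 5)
The paper states this Kodaira lemma as a classical fact and does not provide a proof of it, so there is no in-paper argument to compare against. Your proposal is correct and is the standard textbook proof (it is essentially Lazarsfeld, \emph{Positivity in Algebraic Geometry I}, Prop.\ 2.2.6): restrict across a smooth member $D \in |kA|$, compare the $O(m^{n})$ lower bound on $h^0(X, mL)$ with the $O(m^{n-1})$ upper bound on $h^0(D, mL|_D)$ to produce a section of $mL - kA$ for large $m$, and then absorb the leftover $(k-1)A$ into the effective part by choosing an effective member of $|(k-1)A|$. One minor streamlining: you only need $h^0(X,mL) \geq c\,m^n$ for \emph{infinitely many} $m$, which is immediate from the $\limsup$ characterization of bigness; this sidesteps the small extra argument required to upgrade the bound to hold for all $m \geq m_0$, and it suffices because the restriction-sequence comparison only needs to produce a single suitable $m$. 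Both growth estimates you flag as the ``main obstacle'' are indeed standard: the upper bound on $D$ follows by twisting by a section of $k'A_D - L|_D$ (for a fixed ample $A_D$ on $D$ and $k'$ large) to inject $H^0(D, mL|_D) \hookrightarrow H^0(D, mk'A_D)$ and then applying asymptotic Riemann--Roch.
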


Given a vector bundle $E$ on $X$, $S^m E$ is the $m$-th symmetric product of $E$, and
$$ \pi: \mathbb{P}(E) \to X
$$
denotes the projective bundle of one-dimensional quotients of $E$. As usual, $\mathcal{O}_{\mathbb{P}(E)}(1)$ is the Serre line bundle on $\mathbb{P}(E)$, i.e., the tautological quotient of $\pi^{\ast} E$, and we have $\textnormal{Sym}^m E = \pi_{\ast}\mathcal{O}_{\mathbb{P}(E)}(m)$.


We introduce the notions of ample, nef and big for vector bundles. 
Let $X$ be a projective variety and $E$ be a holomorphic vector bundle on $X$.

\begin{defn}
    \cite[Section\,2]{BKK+15}
    We define the \textit{base locus} of $E$ as the set 
    \begin{align*}
        \textnormal{Bs}(E):=\{x\in X\mid H^0(X,E)\to E_x\,\, \textnormal{is not surjective}\},
    \end{align*}
    and the \textit{stable base locus} of $E$ as 
    \begin{align*}
        \mathbb{B}(E):=\bigcap_{m>0}\textnormal{Bs}(\textnormal{Sym}^mE).
    \end{align*}
    For an ample line bundle $A$, we also define the \textit{augmented base locus} of $E$ as 
    \begin{align*}
        \mathbb{B}^A_+(E):=\bigcap_{r\in\mathbb{Q}_{>0}}\mathbb{B}(E-rA).
    \end{align*}
    Here, $\mathbb{B}^A_+(E)$ dose not depend on the choice of the ample line bundle. 
    Therefore, we write $\mathbb{B}_+(E)$ for simplicity.
\end{defn}

With these notations, we introduce the following definitions.

\begin{defn}
    \cite[Theorem\,1.1,\,Definition\,5.1,\,Definition\,6.1]{BKK+15}
    Let $X$ be a smooth projective variety and $E$ be a vector bundle on $X$. 
    We say that 
    \begin{itemize}
        \item $E$ is ample if $\mathcal{O}_{\mathbb{P}(E)}(1)$ is ample on $\mathbb{P}(E)$,
        \item $E$ is nef if $\mathcal{O}_{\mathbb{P}(E)}(1)$ is nef on $\mathbb{P}(E)$,
        \item $E$ is $L$-\textit{big} if $\mathcal{O}_{\mathbb{P}(E)}(1)$ is big on $\mathbb{P}(E)$,
        \item $E$ is $V$-\textit{big} (or \textit{Viehweg-big}) if $\mathbb{B}_+(E)\ne X$.
    \end{itemize}
\end{defn}

Note that if $E$ is $V$-big, then $E$ is $L$-big (see. \cite[Corollary\,6.5]{BKK+15}) and that 
\begin{align*}
    \pi(\mathbb{B}_+(\mathcal{O}_{\mathbb{P}(E)}(1)))=\mathbb{B}_+(E) \qquad (\textnormal{see \cite[Proposition\,3.2]{BKK+15}}).
\end{align*}

Moreover, if the vector bundle $E$ is positive in the sense of Griffiths, then the line bundle $\mathcal{O}_{\mathbb{P}(E^{})}(1)$ is a positive line bundle, the reader can find the curvature formula in \cite[Chapter V formula $15.15$]{Dem12}.


We now introduce positivity notions for singular Hermitian metrics for vector bundles.
Let $H_r$ be the space of semi-positive, possibly unbounded Hermitian forms on $\mathbb{C}^r$. A singular Hermitian metric $h$ on vector bundle $E$ is a measurable map from $X$ to $H_r$ such that $h(x)$ is finite and positive definite almost everywhere. In particular, we require $0< \det h < +\infty$ almost everywhere.

\begin{defn}\cite{BP09, PT}\label{def Grif semi-posi as sing}
Let $(E,h)$ be a singular Hermitian metric on $X$, then $(E,h)$ is said to be:
\begin{itemize}
		\item \textit{Griffiths semi-negative} if $\log |s|^2_h$ (or $|s|^2_h$) is psh for any local holomorphic section $s$ of $E$.
		\item \textit{Griffiths semi-positive} if the dual metric $h^\star$ on $E^\star$ is Griffiths semi-negative.
	\end{itemize}
\end{defn}

\begin{lem}  \cite{BP09, Rau15} \label{regular}
In the case of singular metrics, we can construct an approximation sequence of smooth metrics using convolution.
\begin{itemize}
 \item Suppose $X$ is a polydisc in $\mathbb{C}^n$, and suppose $h$ is a singular Hermitian metric on $E$ which is Griffiths semi-negative (resp. Griffiths semi-positive). Then, on any smaller polydisc, there exists a sequence of smooth Hermitian metrics $\{h_\nu\}$ decreasing (resp. increasing) pointwise to $h$ whose corresponding curvature tensor is Griffiths semi-negative (resp. semi-positive).
\end{itemize}
\end{lem}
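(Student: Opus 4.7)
The plan is to work locally in a trivialization of $E$ over a polydisc, where the singular metric $h$ becomes a measurable, almost-everywhere positive-definite Hermitian matrix function $H(z)$, and to construct the approximants $h_\nu$ by entrywise convolution $H_\nu := H \ast \rho_\nu$ on any relatively compact smaller polydisc, using a standard family of radial mollifiers $\rho_\nu$ with support shrinking to the origin. Smoothness and positive semi-definiteness of $H_\nu$ are automatic, since a non-negative combination of positive semi-definite Hermitian matrices is again positive semi-definite; strict positive-definiteness (i.e.\ $\det H_\nu > 0$ almost everywhere on a compact set) is a straightforward consequence of $\det H > 0$ a.e.\ and the averaging. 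The entire content of the lemma is therefore (i) preservation of Griffiths semi-negativity under convolution, and (ii) the monotone pointwise convergence $H_\nu \searrow H$.

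For (i), the analytic tool I would invoke is the following preservation property: if $\{f_t\}_{t \in T}$ is a measurable family of non-negative functions on an open set $U \subset \mathbb{C}^n$ with $\log f_t$ psh for every $t$, and $\mu$ is a non-negative Radon measure on $T$, then $\log \int_T f_t \, d\mu(t)$ is again psh. This rests on the fact that the log-sum-exp map $(u_1, \dots, u_k) \mapsto \log \sum_j e^{u_j}$ is convex and non-decreasing in each coordinate, so composing with psh functions $\log f_{t_j}$ produces a psh function under finite sums, and the integral statement follows by monotone approximation by Riemann sums. Applied with $f_t(z) = v^\ast H(z - t) v$ and $d\mu(t) = \rho_\nu(t)\, d\lambda(t)$ for each constant vector $v \in \mathbb{C}^r$, this yields $\log(v^\ast H_\nu v)$ psh, which by the characterization recorded in Definition~\ref{def Grif semi-posi as sing} is exactly Griffiths semi-negativity of $h_\nu$. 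For (ii), using rotationally symmetric mollifiers and the sub-mean-value property for psh functions, each $\log(v^\ast H v)$ has decreasing radial averages, which implies $v^\ast H_\nu v \searrow v^\ast H v$ for every $v$, i.e.\ $H_\nu \searrow H$ in the Loewner order.

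The Griffiths semi-positive case I would deduce by duality. Given singular Griffiths semi-positive $h$, the dual metric $h^\ast$ on $E^\ast$ is Griffiths semi-negative by Definition~\ref{def Grif semi-posi as sing}; apply the semi-negative construction above to produce smooth Griffiths semi-negative $h^\ast_\nu \searrow h^\ast$ on $E^\ast$, then set $h_\nu := (h^\ast_\nu)^\ast$. This is smooth, Griffiths semi-positive (since Griffiths semi-negativity of $h^\ast_\nu$ dualizes back), and $h_\nu \nearrow h$ because the dualization map on positive-definite Hermitian forms reverses the Loewner order.

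The main obstacle, and the only non-formal step, is part (i). Griffiths semi-negativity is a \emph{log}-plurisubharmonic condition on the quadratic form $v^\ast H v$ rather than a plurisubharmonic condition on the entries of $H$, so the naive slogan ``convolution preserves psh'' is insufficient: mollifying the matrix entries gives psh-ness of the entries, not of their logarithm. The convexity-plus-monotonicity of log-sum-exp is precisely what upgrades psh preservation under convex combinations to log-psh preservation under such combinations, and then under integration against $\rho_\nu$. Once this point is secured, positivity of $H_\nu$, monotone convergence, and the duality reduction for the semi-positive case are all routine.
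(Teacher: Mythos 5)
The construction you propose (entrywise convolution $H_\nu = H \ast \rho_\nu$) and the key mechanism (``log-sum-exp'' preservation of plurisubharmonicity under integral averaging) are indeed the approach of the cited references, and your duality reduction and monotonicity arguments are sound. However, there is a genuine gap in step (i): you verify that $\log(v^\ast H_\nu v)$ is psh only for \emph{constant} vectors $v\in\mathbb{C}^r$ and assert that this is ``exactly Griffiths semi-negativity of $h_\nu$'' by Definition~\ref{def Grif semi-posi as sing}. But that definition requires $\log|s|^2_h$ to be psh for \emph{every local holomorphic section} $s$, and the two conditions are not equivalent even for smooth metrics in a fixed trivialization. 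Concretely, for
\[
H(z)=\begin{pmatrix}1+|z|^2 & z\\ \bar z & 1\end{pmatrix}
\]
on the rank-two trivial bundle over a disc, a direct computation gives $\idd\log(v^\ast H v)\big|_{z=0}=\dfrac{|v_1|^4}{|v|^4}\,i\,dz\wedge d\bar z\ \geq 0$ for every constant $v$, yet the Chern curvature at $z=0$ is $\Theta(0)=\mathrm{diag}(-1,1)\,dz\wedge d\bar z$, which is not Griffiths semi-negative. Formula-wise, $\idd\log|v|^2_h=-\dfrac{(\Theta\,v,v)_h}{|v|^2_h}+R$ with a Cauchy--Schwarz remainder $R\geq 0$ built from $D'v$; for a constant $v$ in a generic trivialization this $R$ is strictly positive and masks positive curvature in certain directions, so $\idd\log|v|^2_h\geq 0$ does not force $(\Theta v,v)_h\leq 0$.

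The gap is fixable, and the fix is only a small extension of what you already wrote. For an arbitrary local holomorphic section $s$ (not just a constant) and each fixed $t$, set $\tilde s(\zeta):=s(\zeta+t)$; then
\[
s(z)^\ast H(z-t)\,s(z)=\tilde s(z-t)^\ast H(z-t)\,\tilde s(z-t)=|\tilde s|^2_h(z-t),
\]
and since $\tilde s$ is holomorphic and $h$ is Griffiths semi-negative, this is log-psh in $z$. Feeding $f_t(z)=s(z)^\ast H(z-t)s(z)$ (rather than $v^\ast H(z-t)v$) into your log-sum-exp integration then yields $\log|s|^2_{h_\nu}$ psh for every local holomorphic $s$, which is the actual content of Definition~\ref{def Grif semi-posi as sing}. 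The remaining steps---positive definiteness of $H_\nu$ a.e., the pointwise decrease $v^\ast H_\nu v\searrow v^\ast H v$ from the sub-mean-value property of the psh function $v^\ast Hv$ under nested radial mollifiers, and the dualization for the semi-positive case---are all correct as you state them.
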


The following lemma tells us that the quotient and pull-back preserve semi-positivity.

\begin{lem} \cite[Lemma $2.2.2$, $2.3.4$]{PT} \label{quo}
\begin{enumerate}
\item Let $h$ be a singular Hermitian metric on $E$ and $E\to Q$ be a quotient vector bundle. Suppose that $h$ is Griffiths semi-positive. Then $Q$ has a naturally induced singular Hermitian metric $h_Q$ with Griffiths semi-positivity.
\item  Let $f: X\to Y$ be a holomorphic surjective map between two complex manifolds and let $E$ be a vector bundle on $Y$. If $E$ admits a singular Hermitian metric $h$ with Griffiths semi-positivity. Then $f^*h$ is a singular Hermitian metric on $f^*E$, and it is Griffiths semi-positive.
\end{enumerate}
\end{lem}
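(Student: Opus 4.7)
My plan is to prove both parts by passing to the dual bundle, where Griffiths semi-positivity is conveniently encoded as the plurisubharmonicity of $\log|s|^2_{h^\star}$ for local holomorphic sections $s$. Part (1) is essentially formal, while part (2) requires the local smooth approximation supplied by Lemma \ref{regular} followed by a monotone-limit argument.

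For part (1), I would begin by dualizing the surjection $E \twoheadrightarrow Q$ to obtain the subbundle inclusion $Q^\star \hookrightarrow E^\star$ and then \emph{define} $h_Q$ as the metric whose dual is the restriction of $h^\star$ to $Q^\star$, i.e., $(h_Q)^\star := h^\star\big|_{Q^\star}$. The verification that $h_Q$ is an honest singular Hermitian metric on $Q$ (finite and positive definite a.e.) is routine once one notes that $h^\star$ has this property on $E^\star$ and that the restriction to a nondegenerate subbundle preserves positive definiteness a.e.; one then dualizes back pointwise a.e. to obtain $h_Q$. The Griffiths semi-positivity of $h_Q$ reduces to checking Griffiths semi-negativity of $(h_Q)^\star$, which is immediate: any local holomorphic section $s$ of $Q^\star$ is also a local holomorphic section of $E^\star$, and the identity $|s|^2_{(h_Q)^\star}=|s|^2_{h^\star}$ together with the hypothesis on $h^\star$ forces $\log|s|^2_{(h_Q)^\star}$ to be plurisubharmonic.

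For part (2), the plan is to regularize first and then pass to the limit. Working locally over a polydisc $V\subset Y$ on which $E$ is trivialized, Lemma \ref{regular} provides smooth Griffiths semi-positive metrics $h_\nu$ on $E|_V$ increasing pointwise to $h$, and dually $h_\nu^\star\searrow h^\star$. The pullbacks $f^\ast h_\nu$ are smooth on $f^{-1}(V)$, and the classical curvature identity $\Theta_{(f^\ast E,\, f^\ast h_\nu)}=f^\ast\Theta_{(E,h_\nu)}$ (so that $\Theta(u\otimes\xi)=\Theta_{(E,h_\nu)}(u\otimes df(\xi))\geq 0$) shows they are smooth Griffiths semi-positive. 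Choosing a holomorphic frame $(f^\ast e_i^\star)$ of $f^\ast E^\star$ and writing a local section as $s=\sum a_i\, f^\ast e_i^\star$ with $a_i$ holomorphic, the functions $\log\bigl(\sum a_i\bar a_j (h_\nu^\star)_{ij}\circ f\bigr)$ are plurisubharmonic and decrease a.e. to $\log|s|^2_{f^\ast(h^\star)}=\log|s|^2_{(f^\ast h)^\star}$. A decreasing limit of psh functions is psh unless identically $-\infty$, which is ruled out because $f^\ast h$ is positive definite a.e., so $(f^\ast h)^\star$ is Griffiths semi-negative, i.e., $f^\ast h$ is Griffiths semi-positive. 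The main subtle point I expect is the integrability bookkeeping here: one must confirm that $f^\ast h$ is a singular Hermitian metric in the proper sense, i.e., $0<\det(f^\ast h)<\infty$ almost everywhere on $X$, so that the duality $(f^\ast h)^\star = f^\ast(h^\star)$ makes sense a.e. and the limit is nondegenerate; this relies on the fact that the degeneracy locus of $h$ in $Y$ is contained in a pluripolar set and its preimage under a surjective holomorphic map cannot exhaust $X$, while everything else is formal manipulation of duals and the standard decreasing-limit principle for psh functions.
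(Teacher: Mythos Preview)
The paper does not supply its own proof of this lemma; it is quoted directly from \cite[Lemma 2.2.2, 2.3.4]{PT} with no argument given, so there is nothing in the paper to compare your proposal against.

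That said, your argument is correct. Part (1) is exactly the standard duality argument. For part (2), your regularize-then-limit strategy works, and the bookkeeping you flag is handled as you indicate: since $h^\star$ is Griffiths semi-negative, $\log\det h^\star$ is psh, so the degeneracy locus of $h$ is pluripolar; its pullback under $f$ is the $-\infty$ set of the psh function $(\log\det h^\star)\circ f$, which is not identically $-\infty$ by surjectivity (assuming $X$ connected, as one may), hence has measure zero. One small simplification you could make: part (2) can be done without invoking Lemma \ref{regular} at all. A singular metric $h^\star$ is Griffiths semi-negative if and only if the function $(y,\xi)\mapsto \log|\xi|^2_{h^\star(y)}$ is psh on the total space of $E^\star$ (equivalently, on $V\times\mathbb{C}^r$ in a local frame); this is the characterization used in \cite{BP09,Rau15}. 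Granting that, any local holomorphic section $s=\sum a_i\,f^\ast e_i^\star$ of $f^\ast E^\star$ defines a holomorphic map $x\mapsto (f(x),a_1(x),\dots,a_r(x))$ into $V\times\mathbb{C}^r$, and $\log|s|^2_{(f^\ast h)^\star}$ is the composition of this holomorphic map with the psh function above, hence psh. This avoids the approximation and monotone-limit step entirely.
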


\section{Positivity of induced metrics on the tautological line bundle} 

In this section, we introduce the definition of singular Griffiths positivity and demonstrate that this definition induces singular positivity for tautological line bundles.

\begin{defn}\label{def of strictly Grif posi as sing}
    Let $X$ be a complex manifold and $E$ be a holomorphic vector bundle on $X$. 
    We say that a singular Hermitian metric $h$ on $E$ is 
    \begin{itemize}
        \item \textit{Griffiths negative} if for any $x\in X$, there exist an open neighborhood $U$ of $x$ and $\delta>0$ such that for any local holomorphic section $u\in H^0(U,E)$, we have 
        \begin{align*}
            \idd|u|^2_h\geq\delta|u|^2_h\cdot\idd|z|^2
        \end{align*}
        on $U$ in the sense of currents, where $(z_1,\cdots,z_n)$ is local coordinates of $U$,
        \item \textit{Griffiths positive} if the dual metric $h^*$ on $E^*$ is Griffiths negative.
    \end{itemize}
\end{defn}

There is a definition close to Definition \ref{def of strictly Grif posi as sing} (cf. \cite[Definition\,6.1]{Rau15}).
In this paper, we adopt Definition \ref{def of strictly Grif posi as sing} as a more general setting and show that this definition induces singular positivity for the tautological line bundle.
Here, this inequality $\idd|u|^2_h\geq\delta|u|^2_h\cdot\idd|z|^2$ is equivalent to Griffiths negative if $h$ is smooth. Similar to the proofs in \cite{Rau15}, we obtain the following.

\begin{prop}\label{exist smoothing of Grif posi}
    Let $X$ be a complex manifold and $E$ be a holomorphic vector bundle on $X$ equipped with a singular Hermitian metric $h$. 
    If $h$ is Griffiths negative then for any Stein subset $S$ which satisfies $E|_S$ is trivial, 
        there exists a sequence of smooth Griffiths negative Hermitian metrics $(h_\nu)_{\nu\in\mathbb{N}}$ on any relatively compact Stein subset $\widetilde{S}$ of $S$ where $h_\nu:=h\ast\rho_\nu$ and $(\rho_\nu)_{\nu\in\mathbb{N}}$ is an approximate identity on $S$.
        And this sequence decreasing to $h$ a.e. pointwise and satisfies 
        that there exists $\delta>0$ such that 
        \begin{align*}
            \quad \idd|u|^2_{h_\nu}\geq\delta|u|^2_{h_\nu}\cdot\idd|z|^2,  
        \end{align*}
        for any $\nu\in\mathbb{N}$, any $x\in\widetilde{S}$ and any $0\ne u\in E_x$.
\end{prop}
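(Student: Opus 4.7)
The plan is to adapt the Berndtsson--P\u{a}un/Raufi matrix-convolution smoothing argument to the strict Griffiths-negative setting of Definition \ref{def of strictly Grif posi as sing}. Since $E|_S$ is trivial, I would fix a holomorphic frame and identify $h$ with a measurable Hermitian-matrix-valued function $(h_{\alpha\beta})$ on $S$ that is positive definite almost everywhere. For each $v\in\mathbb{C}^r$, viewed through this frame as a constant section $\tilde v$, the hypothesis applied to $\tilde v$ gives $\idd|\tilde v|^2_h\ge\delta|\tilde v|^2_h\,\idd|z|^2\ge 0$, so the scalar function $|\tilde v|^2_h=\sum h_{\alpha\beta}v^\alpha\bar v^\beta$ is plurisubharmonic and in particular lies in $L^1_{loc}(S)$. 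Polarization then puts every entry $h_{\alpha\beta}$ in $L^1_{loc}(S)$, so the entrywise convolution $h_\nu:=h\ast\rho_\nu$ is a well-defined smooth matrix-valued function on $\widetilde S$ for all $\nu$ large enough that the support of $\rho_\nu$ fits.

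The decisive step is the identity $|\tilde v|^2_{h_\nu}=(|\tilde v|^2_h)\ast\rho_\nu$ for each constant section $\tilde v$, which follows directly from convolving matrix entries against the scalar kernel $\rho_\nu$. Because distributional $\idd$ commutes with convolution against a smooth mollifier, and because inequalities between currents are preserved under convolution with a nonnegative kernel, applying $\ast\,\rho_\nu$ to the hypothesis produces
$$\idd|\tilde v|^2_{h_\nu}=(\idd|\tilde v|^2_h)\ast\rho_\nu\ge\delta\bigl(|\tilde v|^2_h\ast\rho_\nu\bigr)\idd|z|^2=\delta\,|\tilde v|^2_{h_\nu}\,\idd|z|^2.$$
Evaluating this pointwise inequality of smooth $(1,1)$-forms at $x\in\widetilde S$ and letting $v$ range over $\mathbb{C}^r\cong E_x$ yields the Griffiths-negativity bound asserted in the proposition, with the vector $u\in E_x$ interpreted through the fixed trivialization as the constant section $\tilde v=u$.

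The monotone decrease $h_\nu\searrow h$ and the a.e.\ pointwise convergence follow by applying the classical fact, that for a plurisubharmonic $\phi$ and a radially symmetric approximate identity $\rho_\nu$ the convolution $\phi\ast\rho_\nu$ is smooth, decreases in $\nu$, and converges to $\phi$ pointwise, to the scalar function $\phi=|\tilde v|^2_h$ for every $v\in\mathbb{C}^r$, then reassembling matrix-wise via polarization. Positive-definiteness of each $h_\nu$ is then immediate from positive-definiteness of $h$ a.e.\ together with this monotonicity. I expect the only genuinely technical point to be a careful justification that distributional $\idd$ commutes with convolution on the $L^1_{loc}$ function $|\tilde v|^2_h$, which is classical but worth making explicit; everything else is bookkeeping in a single global trivialization, legitimate precisely because $E|_S$ is trivial and $\widetilde S$ is relatively compact in $S$.
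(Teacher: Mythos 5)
Your convolution identity $|\tilde v|^2_{h_\nu}=(|\tilde v|^2_h)\ast\rho_\nu$ only holds when $\tilde v$ is a \emph{constant} section in the chosen trivialization, and this is where the proof breaks down. The conclusion of the proposition has to be strong enough to yield the curvature bound, because that is how it is used in the proof of Theorem \ref{E Grif posi then O_E(1) is also big}: there one picks a local holomorphic section $v$ with $v(y)=u$ and $D'^{h_\nu}v(y)=0$ and reads off $-\sum(\Theta^{h_\nu}_{jk}u,u)dz_j\wedge d\overline z_k\geq\delta|u|^2\,\idd|z|^2$ from the Chern--Weil identity
$\idd|v|^2_{h_\nu}=\sum(D'_{z_j}v,D'_{z_k}v)dz_j\wedge d\overline z_k-\sum(\Theta^{h_\nu}_{jk}v,v)dz_j\wedge d\overline z_k$.
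But your argument only yields the inequality for the constant section $\tilde v$ through $u$, for which $D'\tilde v(x)=-h_\nu^{-1}\partial h_\nu\cdot\tilde v\neq 0$ in general. In the Chern--Weil identity the term $\sum(D'_{z_j}\tilde v,D'_{z_k}\tilde v)dz_j\wedge d\overline z_k$ is $\geq 0$ and sits on the \emph{wrong side}: from $\idd|\tilde v|^2_{h_\nu}\geq\delta|\tilde v|^2_{h_\nu}\idd|z|^2$ you cannot subtract this nonnegative term and keep the lower bound. So the constant-section inequality is strictly weaker than what the proposition needs to deliver; already for a line bundle it only says $e^{-\phi_\nu}(i\partial\phi_\nu\wedge\overline\partial\phi_\nu-\idd\phi_\nu)\geq\delta e^{-\phi_\nu}\idd|z|^2$, which does not give $-\idd\phi_\nu\geq\delta\,\idd|z|^2$.

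The missing ingredient is the translation trick from the Berndtsson--P\u{a}un/Raufi smoothing argument, which lets you treat arbitrary local holomorphic sections $s$ rather than only constants. Write, in the fixed trivialization over $S$,
\begin{align*}
|s|^2_{h_\nu}(z)=\int \sum_{\alpha,\beta} h_{\alpha\beta}(z-w)\,s^\alpha(z)\overline{s^\beta(z)}\,\rho_\nu(w)\,dw
=\int |s_w|^2_h(z-w)\,\rho_\nu(w)\,dw,
\end{align*}
where $s_w(\zeta):=s(\zeta+w)$ is again a local holomorphic section on a translated domain. Applying the hypothesis $\idd_\zeta|s_w|^2_h\geq\delta|s_w|^2_h\,\idd|\zeta|^2$ at $\zeta=z-w$ and integrating against $\rho_\nu(w)\,dw$ (using that $\idd|\zeta|^2$ is translation invariant and that $\idd$ commutes with the superposition) gives $\idd|s|^2_{h_\nu}\geq\delta|s|^2_{h_\nu}\,\idd|z|^2$ for every local holomorphic $s$, which is the genuine statement. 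You should also make explicit the compactness step that turns the locally defined $\delta_x$ of Definition \ref{def of strictly Grif posi as sing} into a single uniform $\delta$ on a neighborhood of $\overline{\widetilde S}$: cover $\overline{\widetilde S}$ by finitely many of the open sets $U$ from the definition and take the minimum of the associated constants; uniformity is needed so that the convolution argument runs with a single $\delta$ for all small enough mollifier radii. Your treatment of the $L^1_{loc}$ entries, of the positive definiteness of each $h_\nu$, and of the monotone decrease and a.e.\ convergence is fine, but the constant-section shortcut has to be replaced by the translated-section identity above.
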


Let $\pi$ be the projection $\mathbb{P}(E)\to X$ and $L:=\mathcal{O}_E(1)$.
Any smooth Hermitian metric $h$ on $E$ introduces a \textit{canonical metric} $h_L$ on $L$ as follows:

Let $e=(e_1,\ldots,e_r)$ be a local holomorphic frame with respect to a given trivialization on $E$.
The corresponding holomorphic coordinates on $E^*$ are denoted by $(w_1,\ldots,w_r)$. 
There is a local section of $e_{L^*}$ of $L^*$ defined by 
\begin{align*}
    e_{L^*}=\sum_{1\leq\lambda\leq r}w_\lambda e^*_\lambda.
\end{align*}
Define the induced quotient metric $h_L$ on $L$ by the morphism $(\pi^*E,\pi^*h)\to L$. 
In other words, the dual metric $h_L^*$ on $L^*$ is a metric induced by the natural mapping $\iota: L^* \hookrightarrow (\pi^*E)^*=\pi^*E^*$.
If $(h_{\lambda\mu})$ is the matrix representation of $h$ with respect to the basis $e$, then $h_L$ can be written as 
\begin{align*}
    h_L=\frac{1}{||e_{L^*}||^2_{h_{L^*}}}=\frac{1}{||\iota\circ e_{L^*}||^2_{\pi^*h^*}}=\frac{1}{||e_{L^*}||^2_{h^*}}=\frac{1}{\sum h^*_{\lambda\mu}w_\lambda\overline{w}_\mu},
\end{align*}
where $h^*_{\lambda\mu}=h^{\mu\lambda}$ and $(h^{\lambda\mu}),(h^*_{\lambda\mu})$ are the matrix representation of $h^{-1}, h^*$, respectively. 
The curvature of $(L,h_L)$ is 
    \begin{align*}
        i\Theta_{(L,h_L)}=-\idd\log h_L=\idd\log||e_{L^*}||^2_{h^*}=\idd\log\Bigl(\sum h^{\mu\lambda}w_\lambda\overline{w}_\mu\Bigr).
    \end{align*}
We fix a point $t\in X$. 
    By \cite[ChapterV]{Dem12}, 
    for any standard coordinate $z=(z_1,\ldots,z_n)$ of $\omega$ centered at point $t$, i.e. $\omega=i\sum dz_j\wedge d\overline{z}_j+O(|z|)$,
    there exists a local holomorphic frame $e=(e_1,\ldots,e_r)$ of $E$ around $t$ such that 
    \begin{align*}
        h_{\lambda\mu}=\delta_{\lambda\mu}-\sum_{j,k}c_{jk\lambda\mu}z_j\overline{z}_k+O(|z|^3),
    \end{align*}
    where these coefficients $c_{jk\lambda\mu}$ are of curvature $\Theta_{(E,h)}$.
    From the following inequality 
    \begin{align*}
        i\Theta_{(E^*,h^*)}(t)&=i\sum c^*_{jk\lambda\mu}dz_j\wedge d\overline{z}_k\otimes (e^*_\lambda)^*\otimes e^*_\mu \\
        =-i\,{}^t\Theta_{(E,h)}&=-i\sum c_{jk\mu\lambda}dz_j\wedge d\overline{z}_k\otimes (e^*_\lambda)^*\otimes e^*_\mu, 
    \end{align*}
    i.e. $c^*_{jk\lambda\mu}=-c_{jk\mu\lambda}$, we have that 
    \begin{align*}
        h^{\mu\lambda}=h^*_{\lambda\mu}=\delta_{\lambda\mu}-\sum c^*_{jk\lambda\mu}z_j\overline{z}_k+O(|z|^3)
        =\delta_{\lambda\mu}+\sum c_{jk\mu\lambda}z_j\overline{z}_k+O(|z|^3).
    \end{align*}
 Therefore, for any point $w\in E_t$, i.e. any point $p=(t,[w])\in\mathbb{P}(E)$ where $[w]=[w_1:\cdots:w_r]$ and $\pi(p)=t$, we have that 
    \begin{align*}
        i\Theta_{(L,h_L)}(p)&=i\sum c_{jk\mu\lambda}\frac{w_\lambda\overline{w}_\mu}{|w|^2}dz_j\wedge d\overline{z}_k+i\sum_{1\leq \lambda,\mu\leq r}\frac{|w|^2\delta_{\lambda\mu}-\overline{w}_\lambda w_\mu}{|w|^4}dw_\lambda\wedge d\overline{w}_\mu\\
        &=i\sum c_{jk\lambda\mu}\frac{w_\mu\overline{w}_\lambda}{|\overline{w}|^2}dz_j\wedge d\overline{z}_k+\omega_{FS}([w])\\
        &=i\frac{\Theta_{(E,h)}(\overline{w})}{|\overline{w}|^2_h}+\omega_{FS}([w]),
    \end{align*}
    where $\omega_{FS}$ is Fubini-Study metric on $\mathbb{P}(E^*_t)\cong\mathbb{P}^{r-1}$.

\begin{thm}\label{E Grif posi then O_E(1) is also big}
    Let $X$ be a complex manifold and $E$ be a holomorphic vector bundle on $X$ equipped with a singular Hermitian metric $h$.
    If $h$ is Griffiths positive then the tautological line bundle $\mathcal{O}_E(1)\to\mathbb{P}(E)$ has a naturally induced singular Hermitian metric and this metric is singular positive.
    In particular, if $X$ is compact then $\mathcal{O}_E(1)$ is big.
\end{thm}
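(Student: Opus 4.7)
The plan is to mimic the smooth quotient construction: on a Stein chart $S \subset X$ over which $E$ is trivial, define $h_L$ on $L = \mathcal{O}_E(1)$ via $h_L = 1 / |e_{L^*}|^2_{h^*}$, whose local weight is
\begin{align*}
\phi_L(t,w) = \log\Bigl(\sum_{\lambda,\mu} h^*_{\lambda\mu}(t)\, w_\lambda \overline{w}_\mu\Bigr).
\end{align*}
To prove singular positivity of $h_L$ in the sense of Definition \ref{smcc}, I will show that on each chart $\pi^{-1}(\widetilde S)$ of $\mathbb{P}(E)$ the weight $\phi_L$ is strictly plurisubharmonic in the sense $i\partial\bar\partial \phi_L \geq \epsilon \omega'$ for some local Kähler form $\omega'$ and some $\epsilon > 0$. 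The main tool will be the smoothing Proposition \ref{exist smoothing of Grif posi} applied to the dual metric $h^*$, which is Griffiths negative by hypothesis.

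\textbf{Paragraph 2.} Fix $t_0 \in X$ and a relatively compact Stein $\widetilde S \Subset S$ containing $t_0$. Proposition \ref{exist smoothing of Grif posi} supplies smooth Griffiths negative Hermitian metrics $h^*_\nu := h^* * \rho_\nu$ decreasing pointwise to $h^*$ on $\widetilde S$ and satisfying
\begin{align*}
i\partial\bar\partial |u|^2_{h^*_\nu} \geq \delta\, |u|^2_{h^*_\nu} \cdot i\partial\bar\partial|z|^2
\end{align*}
uniformly in $\nu$ for some fixed $\delta > 0$. For smooth metrics this inequality is equivalent to a uniform pointwise curvature lower bound $-i\Theta_{(E^*, h^*_\nu)}(u \otimes \xi) \geq \delta |u|^2_{h^*_\nu} |\xi|^2$, and hence dually to a uniform Griffiths positive bound for $h_\nu := (h^*_\nu)^*$ on $E|_{\widetilde S}$. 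Plugging $h_\nu$ into the curvature formula derived just before the statement,
\begin{align*}
i\Theta_{(L, h_{L,\nu})}(t,[w]) = i\frac{\Theta_{(E, h_\nu)}(\bar w)}{|\bar w|^2_{h_\nu}} + \omega_{FS}([w]),
\end{align*}
the horizontal part is bounded below by $\delta \cdot i\partial\bar\partial|z|^2$ and the vertical Fubini--Study part is a positive $(1,1)$-form on each fiber, uniformly bounded below on the compact set $\pi^{-1}(\widetilde S)$. Hence there exist a Kähler form $\omega'$ on $\pi^{-1}(\widetilde S)$ and $\epsilon > 0$ with $i\Theta_{(L, h_{L,\nu})} \geq \epsilon \omega'$ uniformly in $\nu$.

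\textbf{Paragraph 3.} Since $h^*_\nu \searrow h^*$ pointwise, the weights $\phi_{L,\nu}$ of $h_{L,\nu}$ form a decreasing sequence of smooth plurisubharmonic functions whose limit is $\phi_L$. Hence $\phi_L$ is plurisubharmonic, $h_L$ is a well-defined singular Hermitian metric on $L$, and the current inequality $i\partial\bar\partial\phi_L \geq \epsilon \omega'$ passes to the limit, yielding singular positivity of $h_L$ locally on $\pi^{-1}(\widetilde S)$. For the second statement, when $X$ is compact so is $\mathbb{P}(E)$: fix a global Kähler form $\omega_{\mathbb{P}(E)}$ (built from a Kähler form on $X$ together with a background relative Fubini--Study on the fibers), cover $\mathbb{P}(E)$ by finitely many charts $\pi^{-1}(\widetilde S_\alpha)$ as above, and combine the finitely many local estimates into a single global bound $i\Theta_{(L, h_L)} \geq \epsilon \omega_{\mathbb{P}(E)}$. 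Theorem \ref{metricdes} then yields bigness of $\mathcal{O}_E(1)$. The main technical obstacle is the transfer, with uniform constants, of the definition-style strict inequality for $h^*_\nu$ to a classical strict Griffiths lower bound for $h_\nu$, and then to strict positivity of $i\Theta_{(L, h_{L,\nu})}$ via the Fubini--Study formula; everything else is careful bookkeeping of the passage to the limit.
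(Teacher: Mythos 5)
Your proposal is correct and follows essentially the same approach as the paper: apply Proposition \ref{exist smoothing of Grif posi} to the Griffiths negative dual $h^*$, dualize the resulting pointwise curvature bound to get a uniform Griffiths positive bound for $h_\nu$, plug into the curvature formula for $\mathcal{O}_E(1)$, and pass to the decreasing limit of plurisubharmonic weights. The only place you go beyond the paper is spelling out the finite-cover globalization needed for the bigness conclusion on compact $X$, which the paper leaves implicit after establishing local singular positivity.
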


\begin{proof}
    From Proposition \ref{exist smoothing of Grif posi}, for any $x\in X$, there exists a neighborhood $U$ of $x$, $\delta_x>0$ and a sequence of smooth Griffiths negative Hermitian metrics $(h^*_\nu)_{\nu\in\mathbb{N}}$ on $U$ decreasing to $h^*$ a.e. pointwise such that 
    \begin{align*}
        \quad \idd|u|^2_{h^*_\nu}\geq\delta_x|u|^2_{h^*_\nu}\idd|z|^2,  
    \end{align*}
    for any $\nu\in\mathbb{N}$, any $y\in U$ and any $0\ne u\in E^*_y$. 
    By the inequality
    \begin{align*}
        \idd|v|^2_{h^*_\nu}=\sum_{j,k}(D'^{h^*_\nu}_{z_j}v,D'^{h^*_\nu}_{z_k}v)_{h^*_\nu}dz_j\wedge d\overline{z}_k-\sum_{j,k}(\Theta^{h^*_\nu}_{jk}v,v)_{h^*_\nu}dz_j\wedge d\overline{z}_k,
    \end{align*}
    for any local holomorphic section $v\in\mathcal{O}(E^*)_y$, we have that 
    \begin{align*}
        -\sum_{j,k}(\Theta^{h^*_\nu}_{jk}u,u)_{h^*_\nu}dz_j\wedge d\overline{z}_k\geq\delta_x|u|^2_{h^*_\nu}\idd|z|^2.
    \end{align*}
    In fact, for any $u\in E^*_y$ we can take $v\in\mathcal{O}(E^*)_y$ satisfying $v(y)=u$ and $D'^{h^*_\nu}v(y)=0$.
    
 There is a natural anti-linear isometry between $E^*$ and $E$, which we will denote by $J_\nu$.
    Denote the pairing between $E^*$ and $E$ by $\langle\cdot,\cdot\rangle$ which satisfies that $\langle\xi,u\rangle=(u,J_\nu\xi)_{h_\nu}$
    for any local section $u$ of $E$ and any local section $\xi$ of $E^*$.
    Under the natural holomorphic structure on $E^*$, we obtain
    \begin{align*}
        \overline{\partial}_{z_j}\xi=J^{-1}_\nu D'^{h_\nu}_{z_j}J_\nu\xi, \quad D'^{h^*_\nu}_{z_j}\xi=J^{-1}_\nu\overline{\partial}_{z_j}J_\nu\xi.
    \end{align*}
    Thus, for any local sections $\xi_j\in C^\infty(E^*)$ and $u_j\in C^\infty(E)$ satisfying $u_j=J_\nu\xi_j$, we get 
    \begin{align*}
        \sum(\Theta^{h^*_\nu}_{jk}\xi_j,\xi_k)_{h^*_\nu}=-\sum(\Theta^{h_\nu}_{jk}u_k,u_j)_{h_\nu},
    \end{align*}
 Hence, for any $\nu\in\mathbb{N}$, any $y\in U$ and any $0\ne \xi\in E_y$, we have that 
    \begin{align*}
        \Theta_{(E,h_\nu)}(\xi)=\sum_{j,k}(\Theta^{h_\nu}_{jk}\xi,\xi)_{h_\nu}dz_j\wedge d\overline{z}_k\geq\delta_x|\xi|^2_{h_\nu}\idd|z|^2.
    \end{align*}
Let $h_L^\nu$ be canonical metrics on $L|_{\pi^{-1}(U)}$ induced by $h_\nu$ where 
    $L:=\mathcal{O}_E(1)$.
    Let $(e_1,\ldots,e_r)$ be a orthonormal basis on $E$, then we can write 
    \begin{align*}
        i\Theta_{(E,h_\nu)}&=i\sum c^\nu_{jk\lambda\mu}dz_j\wedge d\overline{z}_k\otimes e^*_\lambda\otimes e_\mu
    \end{align*}
    at $y\in U$.
    Then for any point $p=(y,[w])\in\mathbb{P}(E)$, i.e. any $w\in E_y$, we have 
    \begin{align*}
        i\Theta_{(L,h_L^\nu)}(p)&=i\sum c^\nu_{jk\lambda\mu}\frac{w_\mu\overline{w}_\lambda}{|\overline{w}|^2_{h_\nu}}+\omega_{FS}([w])=i\frac{\Theta_{(E,h_\nu)}(\overline{w})}{|\overline{w}|^2_{h_\nu}}+\omega_{FS}([w])\\
        &\geq \delta_x\idd|z|^2+\omega_{FS}([w]),
    \end{align*}
    i.e. $i\Theta_{(L,h^\nu_L)}\geq\delta_x\idd|z|^2+\widetilde{\omega}_{FS}$ on $\pi^{-1}(U)$, where
    there is a global metric $\widetilde{\omega}_{FS}$ on $\mathbb{P}(E)$ that is $\overline{\partial}$-closed and is the Fubini-Study metric when restricted to each fiber. 
    
    Here, $\delta_x\idd|z|^2+\widetilde{\omega}_{FS}$ is K\"ahler, and since the weights $\varphi_\nu$ of $h^\nu_L$ has a uniform positivity and is strictly plurisubharmonic and decreasing to the weight $\varphi$ of the canonical metric $h_L$ induced by $h$ a.e, then $\varphi$ coincides with some strictly plurisubharmonic function.
    Therefore, the singular Hermitian metric $h_L$ on $L$ is singular positive.
\end{proof}

\section{The bigness of direct image}
For a smooth fibration $f: X \to Y$ between smooth projective varieties, and considering $L$ as a big and nef line bundle on $X$, we investigate the positivity of the direct image $f_{\ast}(K_{X/Y} \otimes L)$, occasionally denoted as $f_{\ast}(K_{X/Y} + L)$.

\subsection{Set-up of the metric on $L$} \label{setup1}
Let $L$ be a big and nef line bundle on $X$, and $\omega$ a Kähler form on $X$. By Kodaira's Lemma \ref{kod}, there exists an integer $m$ such that $m\cdot L = A + N$, where $A$ is an ample divisor and $N$ is an effective divisor. Choosing a positive integer $d$, we obtain:
\begin{equation*}
     md\cdot L =  A + N + (d-1)\cdot (A+N).
\end{equation*}
We can find a positive metric $h_A = e^{-\phi_A}$ on $A$, and a nef metric $h_{\varepsilon} = e^{-\phi_{\varepsilon}}$ on $(d-1)\cdot (A+N)$. Since $N$ is effective, its canonical section induces a singular metric $h_N = e^{-\phi_N}$ which has semi-positive curvature in the sense of current. Therefore we have a metric $h_L$ on $L$ as following:
\begin{equation} \label{metric}
     h_L := e^{-\phi_L} := e^{-\frac{1}{md}\phi_A  - \frac{1}{md}\phi_{N} - \frac{d-1}{md}\phi_{\varepsilon}}.
\end{equation}
 Now, choose a sufficiently large integer $d$ such that the multiplier ideal sheaf $\mathcal{I}(\frac{1}{m d}\phi_N) = \mathcal{O}_X$. Next, select a nef metric $h_{\varepsilon} = e^{-\phi_{\varepsilon}}$ on $(d-1) \cdot (A+N)$ such that $\xu \ddbar\phi_A + \xu \pa\dbar(\phi_{\varepsilon}) \geq \varepsilon \omega$. Consequently, we can ensure that the curvature current of $\xu \ddbar \phi_L \geq \delta \omega$ for some positive number $\delta$, thereby enforcing the metric $h_L$ to satisfy $\mathcal{I}(h_L) = \mathcal{O}_X$.

Now the direct image sheaves
$$ \mathcal F:=f_*((K_{X/Y}+L)\otimes \mathcal I(h_L)) = f_*(K_{X/Y}+L)
$$
endowed with the induced $L^2$-metric $h_{\mathcal F}$ from $h_L$, i.e., for any $u_t \in \mathcal F_t$,
$$ \|u_t\|_{h_{\mathcal F}}^2 = \int_{X_t} c_n u\wedge \bar u e^{-\phi_L} ~~\text{with} ~~ c_n=(\sqrt{-1})^{n^2}. $$
We have the next classical results:
\begin{thm} \cite{PT,HPS18} \label{semi-po}
Let $f: X\to Y$ be the projective surjective morphism between smooth varieties with connected fibers. Let the line bundle $(L,h_L)$ be as in the above set-up \eqref{metric}. Then, the induced metric $h_{\mathcal F}$ on $f_{\ast}((K_{X/Y}+L)\otimes \mathcal I(h_L)) = f_{\ast}(K_{X/Y}+L)$ is Griffiths semi-positive.
\end{thm}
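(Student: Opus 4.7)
The plan is to reduce to Berndtsson's classical positivity theorem for smooth positively curved weights via regularization of the singular weight $\phi_L$ and a subsequent passage to the limit in the dual formulation. By Definition \ref{def Grif semi-posi as sing}, Griffiths semi-positivity of $h_{\mathcal{F}}$ is equivalent to the assertion that for every local holomorphic section $\xi$ of $\mathcal{F}^*$ on an open $U\subset Y$, the function $\log|\xi|^2_{h_{\mathcal{F}}^*}$ is plurisubharmonic.

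First I would produce a decreasing sequence of smooth weights $\phi_{L,\nu}\searrow \phi_L$ on $X$ with $\xu\pa\dbar\phi_{L,\nu}\geq 0$ for all large $\nu$. In the decomposition \eqref{metric} only the piece $\phi_N$ (arising from the canonical section of the effective divisor $N$) is singular, and it can be smoothed by a standard analytic regularization whose curvature differs from that of $\phi_N$ by an error of order $-\epsilon_\nu\omega$ with $\epsilon_\nu\to 0$. The strict positivity $\xu\pa\dbar\phi_L\geq \delta\omega$ built into the construction then absorbs this error, so semi-positive curvature is retained for large $\nu$.

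For each smooth semi-positively curved metric $h_{L,\nu}=e^{-\phi_{L,\nu}}$, Berndtsson's theorem in the smooth case yields that the induced $L^2$-metric $h_{\mathcal{F},\nu}$ on $\mathcal{F}=f_*(K_{X/Y}+L)$ is Nakano, hence Griffiths, semi-positive; equivalently $\log|\xi|^2_{h_{\mathcal{F},\nu}^*}$ is plurisubharmonic on $U$ for every local section $\xi$ of $\mathcal{F}^*$. Since $\phi_{L,\nu}\searrow\phi_L$, monotone convergence gives $\|u_t\|^2_{h_{\mathcal{F},\nu}}\nearrow \|u_t\|^2_{h_{\mathcal{F}}}$ fiberwise, so $h_{\mathcal{F},\nu}$ increases to $h_{\mathcal{F}}$ and the dual metrics satisfy $h_{\mathcal{F},\nu}^*\searrow h_{\mathcal{F}}^*$. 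The assumption $\mathcal{I}(h_L)=\mathcal{O}_X$ from the set-up guarantees that $h_{\mathcal{F}}$ is a genuine non-degenerate Hermitian metric, so the decreasing limit $\log|\xi|^2_{h_{\mathcal{F}}^*}=\lim_\nu\log|\xi|^2_{h_{\mathcal{F},\nu}^*}$ is not identically $-\infty$ on $U$, and hence is plurisubharmonic as a decreasing limit of plurisubharmonic functions. This yields Griffiths semi-negativity of $h_{\mathcal{F}}^*$, equivalently the desired Griffiths semi-positivity of $h_{\mathcal{F}}$.

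The main obstacle is the regularization step, namely producing smooth approximants that simultaneously decrease to the singular weight and retain globally semi-positive curvature. In the present setting this is manageable thanks to the built-in strict positivity $\xu\pa\dbar\phi_L\geq \delta\omega$ and the fact that the singular locus of $\phi_L$ is contained in the support of the effective divisor $N$; in the more general situation handled by \cite{PT, HPS18} one typically sidesteps the regularization issue by establishing log-plurisubharmonicity of the fiberwise $L^2$ norms directly via the Ohsawa--Takegoshi extension theorem, which is the deeper analytic ingredient underlying the statement.
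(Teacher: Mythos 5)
The paper does not prove Theorem \ref{semi-po}: it is quoted verbatim from \cite{PT,HPS18} and used as a black box, so there is no internal argument to compare against. I can only assess your proposal on its own merits.

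Your route---smooth the singular weight, invoke Berndtsson's smooth positivity theorem for each approximant, and pass to the limit through the dual formulation---is a natural idea, and the final limit passage is fine: if $\phi_{L,\nu}\searrow\phi_L$ then $h_{\mathcal F,\nu}\nearrow h_{\mathcal F}$ fiberwise, hence $h^*_{\mathcal F,\nu}\searrow h^*_{\mathcal F}$, and the decreasing limit of the plurisubharmonic functions $\log|\xi|^2_{h^*_{\mathcal F,\nu}}$ is psh, not identically $-\infty$ since $\mathcal I(h_L)=\mathcal O_X$. The gap is in the regularization step, and it is genuine.

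You assert that $\phi_N=\log|s_N|^2$ can be smoothed with a curvature error of order $-\epsilon_\nu\omega$, $\epsilon_\nu\to 0$. This is false. The natural global regularization is $\phi_{N,\nu}=\log(|s_N|^2_{h^0}+1/\nu)+\chi_0$ for a fixed smooth reference metric $h^0=e^{-\chi_0}$ on $\mathcal O_X(N)$, and the standard pointwise estimate gives
\begin{equation*}
\xu\pa\dbar\phi_{N,\nu}\;\geq\;\frac{1/\nu}{|s_N|^2_{h^0}+1/\nu}\,\xu\Theta_{(\mathcal O(N),h^0)}\;\geq\;-\bigl(\xu\Theta_{(\mathcal O(N),h^0)}\bigr)^-,
\end{equation*}
so the curvature defect is bounded by the \emph{negative part} of the curvature of the reference metric on $\mathcal O_X(N)$. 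This is a fixed $(1,1)$-form, independent of $\nu$; it does not tend to zero. Since $N$ from Kodaira's Lemma is merely effective, $\mathcal O_X(N)$ need not be nef and this negative part cannot be made arbitrarily small by a choice of $h^0$. Nor can it be absorbed in general by the built-in bound $\xu\pa\dbar\phi_L\ge\delta\omega$ (which is of order $\varepsilon/(md)$), since there is no relation forcing $\varepsilon\omega\ge\bigl(\xu\Theta_{(\mathcal O(N),h^0)}\bigr)^-$. The same obstruction is visible at the level of Demailly's regularization theorems: for a quasi-psh potential with positive Lelong numbers on a compact manifold, the smooth decreasing approximants have curvature defect converging to the Lelong number function, not to zero. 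Boucksom's regularization (the paper's Lemma~\ref{rich}) also does not rescue this globally, because it requires either a strongly pseudoconvex base or a strictly positive reference form $\theta$, and a big nef but non-ample $L$ admits no smooth strictly positive curvature representative on compact $X$.

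Your closing sentence identifies the correct proof: \cite{PT,HPS18} (and Berndtsson--P\u aun, which the paper invokes via Lemma~\ref{quo}) establish plurisubharmonicity of $\log|\xi|^2_{h^*_{\mathcal F}}$ directly from the Ohsawa--Takegoshi extension theorem applied fiberwise, working with the singular weight itself and bypassing any global regularization. The body of your argument, however, rests on the false claim that the regularization error vanishes.
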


By Lemma \ref{quo}, we establish that the induced metric $h_1$ on the tautology line bundle $\mathcal{O}_{\mathbb{P}(\mathcal{F})}(1)$ is semi-positive. The challenging aspect lies in proving the positivity of $h_1$ on $\mathcal{O}_{\mathbb{P}(\mathcal{F})}(1)$. Referring to the construction of $h_L$ in \eqref{metric}, the singular part is $\phi_N$, which has analytic singularities.

The effective divisor $N$ can be pretty complicated, but we can begin with the simple case, we first assume that $N$ has simple normal crossing support (SNC), i.e., $N= \sum a_i D_i$ for smooth reduce divisor $D_i$ and $a_i \geq 0$. Moreover, we can assume that each component of $N$ intersects with each fiber transversally. In this case, even the metric $h_L$ is singular, but the induced metric $h_{\mathcal{F}}$ on $\mathcal{F}$ is smooth.

Indeed, locally let $\Omega\subset X$ be a coordinate patch on $X$, we take $(z_1, \cdots, z_n, t_1, \cdots, t_m)$ a coordinate system on $X$ such that $f: (z_1, \cdots, z_n, t_1, \cdots, t_m) \rightarrow (t_1, \cdots, t_m)$. If $D_i = \{z_i=0\}$, We can express the local weight $\phi_L$ as $\phi_L = \sum_{i} \frac{a_i}{md}\log |z_i|^2$ modulo smooth functions. One can use partitions of unity to reduce to checking that integrals of the form $\int_{\Omega\cap X_t}|u_t|^2 e^{-\phi_L}$ vary smoothly with $t$. In the next section, we will show $(\mathcal F, h_{\mathcal F})$ is positive in the sense of Nakano, which yields $\mathcal{O}_{\mathbb{P}(\mathcal{F})}(1)$ is positive line bundle.

In the general case, there exist points $t \in Y$ where the divisor $N$ does not intersect the fiber $X_t$ transversally, or $N$ contains some components of $X_t$. Additionally, the divisor $N$ may not have SNC support. We can consider a log resolution $\pi: X' \to X$ of $(X, N, f)$, here we consider $(N, f)$ as the ideal sheaves. Set $f':= f\circ\pi: X' \to Y$, and $(L', h_{L'}) := (\pi^{\ast}L, \pi^{\ast}h_L)$. Let $M$ be the reduced divisor induced by $\pi^{-1}(N)$. We know
$$
\pi_*((K_{X'/Y}+L')\otimes \mathcal I(h_{L'})) = (K_{X/Y}+L)\otimes \mathcal I(h_L).
$$
The map $f'$ may not be smooth anymore, some components of $M$ may be irreducible components of fibers of $f'$.

Let $Y_0$ be the Zariski open set of regular values of $f'$, and take a Zariski open subset $Y_1$ of $Y_0$ such that $M$ intersects each fiber transversally. Set $X_1 := f'^{-1}(Y_1), f_1 := f'|_{X_1}$, and $(L_1, h_{L_1}) := (L', h_{L'})|_{X_1}$. We will focus on $(f_1, L_1, h_{L_1})$. In the next section, we will show $(\mathcal F, h_{\mathcal F})$ is positive in the sense of Nakano on $Y_1$ by $L^2$-method, see Theorem \ref{Thm2}. Indeed, the case $(f_1: X_1\to Y_1, L_1, h_{L_1})$ satisfies the conditions outlined in Set-up \ref{setup2}, particularly exhibiting klt-type singularities as illustrated in Example \ref{3-ex}. Hence we come to the following conclusion. 

\begin{lem} \label{loc}
Let $f: X\to Y$ and the line bundle $(L,h_L)$ be as in the above Set-up \eqref{metric}. Then, there exists a Zariski open subset $U$ of $Y$ such that the induced metric $(h_1, \mathcal{O}_{\mathbb{P}(\mathcal{F})}(1))$ comes from $(h_{\mathcal F}, f_{\ast}(K_{X/Y}+L))$ is smooth and positive on $\pi^{-1}(U)$.
\end{lem}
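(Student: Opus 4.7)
The plan is to reduce to a situation where the Nakano positivity theorem of the next subsection (Theorem \ref{Thm2}) applies, and then propagate positivity from $\mathcal F$ to the tautological line bundle $\mathcal O_{\mathbb P(\mathcal F)}(1)$ via the curvature formula derived at the end of Section 3. The difficulty is that the analytic singularities of $h_L$ along the effective divisor $N$ can fail to be transverse to the fibers of $f$, and $N$ itself may fail to have SNC support; both defects must be cured by a birational modification before the $L^2$-method can be invoked.

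First I would perform the log resolution $\pi : X' \to X$ of $(X, N, f)$ already introduced in the setup, so that the total transform $\pi^{-1}(N)$ supports a reduced SNC divisor $M$, and set $f' := f \circ \pi$, $(L', h_{L'}) := (\pi^{\ast} L, \pi^{\ast} h_L)$. Since the choice of $d$ in \eqref{metric} guarantees $\mathcal I(h_L) = \mathcal O_X$, the standard multiplier ideal identity
\[
\pi_{\ast}\bigl((K_{X'/Y} + L') \otimes \mathcal I(h_{L'})\bigr) = (K_{X/Y} + L) \otimes \mathcal I(h_L) = f_{\ast}(K_{X/Y} + L)
\]
holds, and the $L^2$-metric induced from $h_{L'}$ via $f'$ agrees with $h_{\mathcal F}$ under this identification; this transfers the positivity question entirely to the birational model $X'$.

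Next I would single out the Zariski open subset $U \subset Y$. Take $Y_0 \subset Y$ to be the open locus of regular values of $f'$, and let $Y_1 \subset Y_0$ be the further Zariski open subset on which every component of $M$ meets each fiber of $f'$ transversally. Put $U := Y_1$, $X_1 := (f')^{-1}(U)$, $f_1 := f'|_{X_1}$, and $(L_1, h_{L_1}) := (L', h_{L'})|_{X_1}$. On $X_1$ the morphism $f_1$ is smooth, $M \cap X_1$ is a relative SNC divisor, and the local weight of $h_{L_1}$ is, up to smooth terms, of the form $\sum_i (a_i/md)\log|z_i|^2$ with coefficients $0 \le a_i/md < 1$; in other words, the triple $(f_1, L_1, h_{L_1})$ realises the klt-type configuration of the upcoming Set-up \ref{setup2}, as in Example \ref{3-ex}. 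Smoothness of $h_{\mathcal F}$ on $U$ then follows from a partition-of-unity reduction: each fiber integral $\int_{X_t \cap \Omega} c_n\, u_t \wedge \bar u_t\, e^{-\phi_L}$ is of the form $\int |u_t|^2 / \prod_i |z_i|^{2a_i/md}$ in local coordinates and varies smoothly in $t$ by differentiation under the integral sign, exactly because the SNC components of $M$ are transverse to the fibers and the exponents are $<1$.

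Finally, I would feed this data into Theorem \ref{Thm2} of the next subsection to conclude that $(\mathcal F, h_{\mathcal F})|_U$ is Nakano positive, hence in particular strictly Griffiths positive on $U$. Applying the curvature formula
\[
i\Theta_{(\mathcal O_{\mathbb P(\mathcal F)}(1), h_1)}(p) = i\frac{\Theta_{(\mathcal F, h_{\mathcal F})}(\bar w)}{|\bar w|^2_{h_{\mathcal F}}} + \omega_{FS}([w])
\]
developed in Section 3 to each point $p = (t, [w]) \in \pi^{-1}(U)$ yields a smooth, strictly positive $(1,1)$-form, which is precisely the claim. The main obstacle is the careful matching at the transition step: one must verify that after the log resolution the pair $(f_1, L_1, h_{L_1})$ genuinely satisfies the hypotheses of Theorem \ref{Thm2} — in particular that the klt condition survives restriction to $U$ and that no residual multiplier ideal sheaf appears on the birational side — whereas the curvature-formula step and the smoothness verification are then routine.
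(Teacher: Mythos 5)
Your proposal matches the paper's own argument essentially step for step: pull back via a log resolution $\pi : X' \to X$ of $(X, N, f)$ so that the singularities of $h_L$ become a reduced SNC divisor $M$; restrict to the Zariski open $U = Y_1$ over which $f'$ is smooth and $M$ meets fibers transversally, so that $(f_1, L_1, h_{L_1})$ realizes the klt configuration of Set-up \ref{setup2} and Example \ref{3-ex}; invoke Theorem \ref{Thm2} to get smoothness and Nakano (hence Griffiths) positivity of $h_{\mathcal F}$ on $U$; and transfer to $\mathcal O_{\mathbb P(\mathcal F)}(1)$ via the curvature formula. Your write-up is actually slightly more explicit than the paper at the last step, where the paper only implicitly appeals to the fact, recorded earlier in Section~2, that a Griffiths-positive metric on $E$ induces a positive metric on $\mathcal O_{\mathbb P(E)}(1)$; otherwise the two arguments coincide.
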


Now we will prove the main theorem of this paper. We denote the line bundle $\mathcal{O}_{\mathbb{P}(\mathcal{F})}(1)$ by $F$ with the induced metric $h_1$. We want to show that $F$ is big. Now we know that $F$ is nef, and has singular Hermitian metric $h_1$ which is singular semi-positive as in Definition \ref{smcc}, i.e., it is pseudo-effective. Moreover, it is smooth and positive on a Zariski open subset $U$ of $Y$.

Let us recall the algebraic definition of big line bundle on compact K\"ahler manifold. First, we introduce the concept of \emph{Kodaira--Iitaka dimension} of a line bundle. If $F$ is a line bundle, the Kodaira--Iitaka dimension $\kappa(F)$ is the supermum of the rank of the canonical maps:
\begin{align*}
    \Phi_m : &Y\setminus B_m \to \mathbb{P}(V_m)\\
    &x \mapsto H_x:= \{\sigma\in V_m: \sigma(x)=0\}, 
\end{align*}
with $V_m = H^0(Y, mF)$ and $B_m = \cap_{\sigma\in V_m}\sigma^{-1}(0)$ is the base locus of $V_m, m\geq 1$. A line bundle is said to be big if $\kappa(F)= \dim Y$.

The direct method to prove the bigness of $F$ is to show that $\Phi_m$ is birational onto its image. For any point $p\in U$, on the small neighbor $U_p \subset U$, the metric $h_1$ is smooth and positive. For some integer $m_0>0$, we can produce a singular Hermitian metric with a given logarithmic pole $h^{m_0}_1\cdot e^{-\tau(z)\log|z-p|^2}$ in neighbor $U_p$, here $\tau$ is a smooth cut-off function supported on $U_p$. Then Nadel--Nakano vanishing with multiplier ideal sheaf (cf. \cite[Theorem 1.3]{Iwa21}) can be used to produce sections of $L^k$ which generate all jets of order $(\frac{k}{m_0}-n)$ at the points $p$ so that $L$ is big.
\begin{thm} \label{main}
Let $f: X\to Y$ be a smooth fibration of smooth projective varieties. If the line bundle $L$ is big and nef, then $f_{\ast}(K_{X/Y} \otimes L)$ is also nef and $V$-big.
\end{thm}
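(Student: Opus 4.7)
The plan is to equip $\mathcal{F}=f_*(K_{X/Y}+L)$ with the $L^2$-metric $h_{\mathcal{F}}$ induced by the singular metric $h_L$ from Set-up \ref{setup1}, descend to the quotient metric $h_1$ on the tautological line bundle $F:=\mathcal{O}_{\mathbb{P}(\mathcal{F})}(1)$, and apply a jet-separation argument based on Iwai's generalized Nadel--Nakano vanishing to produce, for some ample $A$ on $Y$ and some $m\gg 1$, a non-trivial section of $F^m\otimes\pi^*A^{-1}$. Such a section is the same as a non-trivial section of $\textnormal{Sym}^m\mathcal{F}\otimes A^{-1}$, which gives $\mathbb{B}(\mathcal{F}-A/m)\subsetneq Y$ and hence $\mathbb{B}_+(\mathcal{F})\subsetneq Y$, i.e. the required $V$-bigness; nefness of $\mathcal{F}$ is already known from \cite{Mou97}.

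First I would set up the metric. By Set-up \ref{setup1} one has $\mathcal{I}(h_L)=\mathcal{O}_X$ and $\xu\ddbar\phi_L\geq\delta\omega$ for some $\delta>0$. Theorem \ref{semi-po} gives Griffiths semi-positivity of $h_{\mathcal{F}}$, Lemma \ref{quo}(1) descends this to singular semi-positivity of $h_1$ on $F$, and Lemma \ref{loc} furnishes a Zariski open $U\subset Y$ on which $h_1$ is smooth and strictly positive on $\pi^{-1}(U)\subset\mathbb{P}(\mathcal{F})$. Since $F$ is also nef, for every $\varepsilon>0$ it admits a smooth metric with curvature bounded below by $-\varepsilon\Omega$ for any fixed \kah form $\Omega$ on $\mathbb{P}(\mathcal{F})$.

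Next I perform the jet separation. Pick any $p\in\pi^{-1}(U)$, local holomorphic coordinates $z$ centered at $p$ in a neighborhood $U_p\Subset\pi^{-1}(U)$ on which $\xu\Theta_{(F,h_1)}\geq c_0\Omega$, and a smooth cutoff $\tau$ supported in $U_p$ with $\tau\equiv 1$ near $p$. Fix an ample line bundle $A$ on $Y$ with smooth positive metric $h_A$, and for $m\gg 1$ consider on $F^m\otimes\pi^*A^{-1}$ the modified singular metric
$$ H_m:=h_1^m\cdot(\pi^*h_A)^{-1}\cdot e^{-(n+\dim Y)\,\tau\,\log|z|^2}. $$
The margin $m c_0\Omega$ of $h_1^m$ on $U_p$ absorbs both the $\pa\ov\pa\tau$ bump and the logarithmic pole away from $p$, while the $\delta\omega$-margin present in $h_L$ is inherited by $h_{\mathcal{F}}$ and hence by $h_1$, so that for $m$ sufficiently large the $m\delta$-contribution dominates the pulled-back Chern curvature of $A$; consequently the curvature current of $H_m$ is globally strictly positive. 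By construction $\mathcal{I}(H_m)_p=\mathfrak{m}_p$, so Iwai's generalized Nadel--Nakano vanishing \cite[Theorem 1.3]{Iwa21} applied to $(F^m\otimes\pi^*A^{-1},H_m)$ produces a global section of $K_{\mathbb{P}(\mathcal{F})}\otimes F^m\otimes\pi^*A^{-1}$ non-vanishing at $p$. Using the canonical bundle formula $K_{\mathbb{P}(\mathcal{F})}=(\mathrm{rank}\,\mathcal{F})\,F+\pi^*(K_Y-\det\mathcal{F})$ and absorbing the pulled-back twist into further powers of $F\otimes\pi^*A^{-1}$ available from the $\delta\omega$-margin, I obtain for some $m'$ a section of $F^{m'}\otimes\pi^*A^{-1}$ non-vanishing at $p$.

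Pushing forward along $\pi$ yields a non-trivial section of $\textnormal{Sym}^{m'}\mathcal{F}\otimes A^{-1}$, so $\mathbb{B}(\mathcal{F}-A/m')\subsetneq Y$ and consequently $\mathbb{B}_+(\mathcal{F})\subsetneq Y$, which is the required $V$-bigness. The main obstacle is the curvature bookkeeping in the third paragraph: $h_1$ is only singular semi-positive away from $\pi^{-1}(U)$, so $H_m$ must leverage both the $c_0$-margin on $U_p$ and the $\delta\omega$-margin of $h_L$ to simultaneously dominate the $\pa\ov\pa\tau$ contribution, the logarithmic pole off $p$, and the negative pullback $-\pi^*c_1(A)$; achieving global strict positivity while keeping the singularity $\mathcal{I}(H_m)_p=\mathfrak{m}_p$ effective forces a careful choice of $m$, and the strict positivity $\xu\ddbar\phi_L\geq\delta\omega$ from Set-up \ref{setup1} is crucial for absorbing the twist by $\pi^*A^{-1}$.
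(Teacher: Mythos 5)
Your plan follows the sketch the paper itself gestures at before stating Theorem 4.2 (jet separation via Nadel--Nakano vanishing), but the paper's actual proof abandons that route and instead establishes $L$-bigness via the volume functional and Boucksom's theorem, namely $\mathrm{Vol}(F)\geq\int_Y\omega_{ac}^m>0$, where $\omega_{ac}$ is the absolutely continuous part of $c_1(F,h_1)$, using only that $h_1$ is singular semi-positive everywhere and smooth positive on a Zariski open $\pi^{-1}(U)$. It then deduces $V$-bigness from a Nadel-vanishing containment $\textnormal{Bs}(\mathcal{O}_{\mathcal{F}}(1)-\varepsilon A)\subset\pi^{-1}(Y\setminus U)$, with the already-established bigness of $F$ available. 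The reason the paper takes this detour is precisely where your argument has a gap.

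The gap is the claimed global strict positivity of $H_m$. Splitting $L$ as $L'+\delta' f^*A_Y$ with $L'$ psef (which is what the ``$\delta\omega$-margin'' of $h_L$ buys you) indeed gives $\xu\Theta_{(F,h_1)}\geq\delta'\pi^*\omega_{A_Y}$ as a current on all of $\mathbb{P}(\mathcal{F})$. But this lower bound is \emph{degenerate along the fibers of $\pi$}. On $\pi^{-1}(Y\setminus U)$, where $h_1$ is merely singular semi-positive, the fiber-direction positivity you see from the Fubini--Study term in the curvature computation is a pointwise statement on the smooth locus and does not survive as a current lower bound. Consequently on $\pi^{-1}(Y\setminus U)$ the curvature current of $H_m=h_1^m(\pi^*h_A)^{-1}e^{-\tau\log|z|^2}$ is only $\geq(m\delta'-1)\pi^*\omega_A$, which is semi-positive but not a K\"ahler lower bound, and the cutoff terms are zero there so they do not help. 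Nadel--Nakano vanishing (Iwai's version included, whose curvature hypothesis is a Nakano bound below by $C\omega$ with $C>0$ on the complement of an analytic set) needs a genuine K\"ahler lower bound globally; semi-positivity degenerate along fibers does not suffice. You cannot borrow positivity from the nef metrics $h_\varepsilon$ either, since those carry negative curvature $-\varepsilon\Omega$ everywhere and do not close the fiber-direction gap. To make the jet-separation argument work one would first need to know $F$ is big (so that it carries a K\"ahler current), which is exactly what the paper's volume argument supplies and what your argument is trying to produce, so as written the plan is implicitly circular. A secondary slip: the canonical bundle formula on $\mathbb{P}(\mathcal{F})$ is $K_{\mathbb{P}(\mathcal{F})}=-r\cdot\mathcal{O}_{\mathcal{F}}(1)+\pi^*(K_Y+\det\mathcal{F})$ with $r=\mathrm{rank}\,\mathcal{F}$; your formula has the opposite sign on the tautological term and the wrong sign on $\det\mathcal{F}$, and the resulting $\pi^*(K_Y+\det\mathcal{F})$ twist also has to be disposed of, which you do not address.
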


\begin{proof}
Since we already know the line bundle $F= \mathcal{O}_{\mathbb{P}(\mathcal{F})}(1)$ is nef. According to the classical results, see \cite[Corollary 6.19]{Dem10} for details, a nef line bundle $F$ on $Y$ is big if and only if its top self-intersection $F^m = \int_Y c_1(F)^m >0$.

We know that $c_1(F)$ and $c_1(F,h_1)$ are in the same cohomology class. However, here $(F, h_1)$ is pseudo-effective, the wedge product of current $c_1(F,h_1) = \frac{\sqrt{-1}}{2\pi} \Theta_{(F, h_1)}$ may not be well-defined. Hence, we choose its absolute continuous part $c_1(F,h_1)_{ac}=: \omega_{ac}$, the Radon--Nikodym theorem ensures that $\omega_{ac}$ is a $(1,1)$-form with $L^1_{loc}$ coefficients. Then $\omega_{ac}^m$ exists for almost all $y\in Y$.
Indeed, for a $(1,1)$-current $T$ on $Y$. The Lebesgue decomposition of $T$ is $T= T_{ac} + T_{sing}$, consisting of the absolutely continuous and singular components. If $T \geq 0$, it follows that $T_{ac}\geq 0$.

We introduce an important concept, the so-called volume of the holomorphic line bundle of $F$:
$$
\mathrm{Vol}(F) := \limsup_{p\rightarrow \infty} \frac{m!}{p^m} \mathrm{\dim} H^0(Y, F^p).
$$
Thus, $F$ is big if and only if $\mathrm{Vol}(F)>0$. If moreover, $F$ is smooth and positive, by the Kodaira vanishing and asymptotic Riemann-Roch-Hirzebruch formula, we have $\mathrm{Vol}(F)= \int_Y c_1(F)^m$.

Now by \cite[Lemma 2.3.44]{MM07}, we have $\mathrm{Vol}(F)\geq \int_Y \omega_{ac}^m$. In fact, a theorem of S. Boucksom \cite[Theorem $1.2$]{Bou02} says that 
$$\mathrm{Vol}(F) = \sup\{\int_Y \omega_{ac}^m : \omega\in c_1(F)~ \text{closed semi-positive current}\}$$
and the supremum involved is always finite. By Lemma \ref{loc}, we know that $c_1(F,h_1) = \frac{\sqrt{-1}}{2\pi} \Theta_{(F, h_1)} > 0$ as a smooth $(1,1)$-form on open subset $U$ of $Y$. Hence we have
$$
\mathrm{Vol}(F) \geq  \int_Y \omega_{ac}^m >0.
$$
We conclude that $\mathcal{O}_{\mathbb{P}(\mathcal{F})}(1)$ is big, i.e. $\mathcal{F}$ is $L$-big. 
Here, $\mathbb{B}_+(\mathcal{O}_{\mathbb{P}(\mathcal{F})}(1))\ne\mathbb{P}(\mathcal{F})$ by \cite[Propositon-Definition\,4.2]{BKK+15}.

From Lemma \ref{loc}, there exists a Zariski open subset $U$ of $Y$ such that the induced metric of $\mathcal{O}_{\mathbb{\mathcal{F}}}(1)$ is smooth and positive on $\pi^{-1}(U)$.
For an ample line bundle $A$ on $\mathbb{B}(\mathcal{F})$ and an enough small $\varepsilon\in\mathbb{Q}_{>0}$, we get $\textnormal{Bs}(\mathcal{O}_{\mathbb{\mathcal{F}}}(1)-\varepsilon A)\subset \pi^{-1}(Y\setminus U)=\mathbb{P}(\mathcal{F})\setminus\pi^{-1}(U)$ by the Nadel vanishing.
Hence, we have that 
\begin{align*}
    \mathbb{B}_+(\mathcal{F})&=\pi(\mathbb{B}_+(\mathcal{O}_{\mathbb{\mathcal{F}}}(1)))\subset\pi(\mathbb{B}(\mathcal{O}_{\mathbb{\mathcal{F}}}(1)-\varepsilon A))=\pi\Bigl(\bigcap_{m>0}\textnormal{Bs}(\mathcal{O}_{\mathbb{P}(\mathcal{F})}(m)-\varepsilon A^{\otimes m})\Bigr)\\
    &\subset \pi(\textnormal{Bs}(\mathcal{O}_{\mathbb{P}(\mathcal{F})}(1)-\varepsilon A))\subset \pi(\pi^{-1}(Y\setminus U))=Y\setminus U\ne Y,
\end{align*}
by \cite[Proposition\,3.2]{BKK+15}. Therefore, we know that $\mathcal{F}$ is $V$-big. 
\end{proof}

\begin{rem}
We can also establish the $L$-bigness of $\mathcal{O}_{\mathbb{P}(\mathcal{F})}(1)$ using Theorem \ref{E Grif posi then O_E(1) is also big}, provided that the effective divisor in Set-up \eqref{metric} has simple normal crossing supports.
\end{rem}


\section{Nakano positivity via the optimal $L^2$-estimate}

In \cite{DNWZ20}, the authors give a nice characterization of the Nakano positivity of holomorphic vector bundle with smooth metric via the optimal $L^2$-estimate condition. It is worth noting that in many situations, the metric $h_L$ of the line bundle is singular, but it is smooth concerning the $t$ variables. This means the direct image vector bundle is smooth. Therefore it is possible to use the optimal $L^2$-estimate condition to study the Nakano positivity in this kind of singular case.

\subsection{Set-up of case $Z$} \label{setup2}
Let $p:\cX\to D$ be a holomorphic proper fibration (i.e., submersion) from a $(n+m)$-dimensional K\"ahler manifold $(\cX, \omega)$ onto the bounded pseudoconvex domain $D\subset \mathbb C^{m}$, and let $(L,h_L)$ be a holomorphic line bundle endowed with a possibly singular hermitian metric $h_L$. Let $\Omega\subset \cX$ be a coordinate patch on $\cX$. We take $(z_1,\dots z_n, t _1,\dots, t_m)$ a coordinate system on $\Omega$ such that the last $m$ variables $t_1,\dots, t_m$ corresponds to the map $p$ itself. We assume that $\cX$ can be covered by the system of such a coordinate subset.
\begin{enumerate} [label={\color{violet}(Z.\arabic*)}]
\item \label{Z1} The metric $h_L = e^{-\psi_L}$ and the local weights $\psi_L$ are smooth with respect to $t_1,\dots, t_m$ on every coordinate subset on $\cX$.

\item \label{Z2} The Chern curvature of $(L,h_L)$ satisfies
$i\Theta_{(L,h_L)} \ge \delta\cdot \omega$
in the sense of currents on $\cX$ for some positive real number $\delta$.

\item \label{Z3} The multiplier ideal sheaf $\mathcal I(h_{L_t}) = \mathcal{O}_{X_t}$ for each $t\in D$, here $h_{L_t}:= h_L|_{X_t}$.
\item \label{Z4} The K\"ahler manifold $\cX$ contains a Zariski open subset which is Stein.
\end{enumerate}
\medskip

\noindent With these assumptions we set
\begin{align} \label{F}
\mathcal F:=p_*((K_{\cX/D}+L)\otimes \mathcal I(h_L)) = p_*(K_{\cX/D}+L).
\end{align}
\noindent By assumption \ref{Z2} and due to K\"ahler version of Ohsawa-Takegoshi theorem, see \cite{CaoOT}, $\mathcal F$ is indeed a vector bundle and $\mathcal F_t = H^0(X_t, K_{X_t} + L_t)$ for every $t \in D$. There is a Hermitian  metric $h_{\mathcal F}$ on $\mathcal F$ induced by $h_L$, i.e., for any $u_t \in \mathcal F_t$,
$$ \|u_t\|_{h_{\mathcal F}}^2 = \int_{X_t} c_n u\wedge \bar u e^{-\psi_L} ~~\text{with} ~~ c_n=(\sqrt{-1})^{n^2}.$$

\noindent By assumption \ref{Z1}, we know the integrals of the form $\int_{\Omega\cap X_t}|u_t|^2 e^{-\psi_L}$ vary smoothly with $t$ and keep the smoothness under the change of coordinate. The metric $\|\cdot \|_{h_{\mathcal F}}$ on the direct image sheaf $\mathcal F$ is well-defined and smooth.

\begin{rem}
It would be better and more intrinsic to rephrase the assumption \ref{Z1} as follows: Let $V_i$ be the smooth horizontal lift of vector fields $\frac{\partial}{\partial_{t_i}}$, and we assume that $\psi_L$ are smooth concerning these horizontal vector fields. This condition can make the metric $\|\cdot \|_{h_{\mathcal F}}$ on the direct image sheaves smooth.
\end{rem}

\begin{rem}
If $p: \cX \to D$ is the projective morphism, the assumption \ref{Z4} is satisfied.
\end{rem}

\begin{thm} \label{Thm2}
Under the set-up of case $Z$, the Hermitian holomorphic vector bundle $(\mathcal F, h_{\mathcal F})$ over $D$ defined in \eqref{F} is positive in the sense of Nakano.
\end{thm}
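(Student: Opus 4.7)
The plan is to verify Nakano positivity through the optimal $L^2$-estimate characterization of \cite{DNWZ20}. By that criterion it suffices to show, for every smooth strictly plurisubharmonic $\phi$ on a relatively compact pseudoconvex subdomain $D'\Subset D$ and every smooth, $\bar\partial$-closed, compactly supported $\mathcal F$-valued $(m,1)$-form $g$ on $D'$, that one can solve $\bar\partial u=g$ with the sharp bound
\begin{equation*}
    \int_{D'} |u|^2_{h_\mathcal F}\,e^{-\phi} \;\le\; \int_{D'} \bigl\langle (i\partial\bar\partial\phi)^{-1}g,\,g\bigr\rangle_{h_\mathcal F}\,e^{-\phi}.
\end{equation*}
The positive $\delta$ furnished by \ref{Z2} will upgrade the resulting semi-positivity to strict Nakano positivity.

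The first step is to lift the problem to the total space. Writing locally $g = h\wedge dt_1\wedge\cdots\wedge dt_m$ with $h=\sum_j h_j\, d\bar t_j$ and each $h_j(t)\in H^0(X_t,K_{X_t}\otimes L)$, I would construct an $L$-valued $(n+m,1)$-form $\tilde g$ on $\mathcal X':=p^{-1}(D')$ by extending each $h_j$ via its canonical horizontal lift, so that $\tilde g$ restricts fiberwise to the $h_j$ and has no vertical $(1,0)$-component with respect to the Chern-connection splitting of $h_\mathcal F$. Using \ref{Z4} I would pass to a Stein Zariski open subset, equip it with a complete K\"ahler metric in the standard Demailly manner, and apply H\"ormander's $L^2$-theorem on $\bar\partial$ with weight $\Phi := \psi_L + p^*\phi$. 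By \ref{Z2},
\begin{equation*}
    i\partial\bar\partial\Phi \;\ge\; \delta\,\omega \;+\; p^*(i\partial\bar\partial\phi),
\end{equation*}
which furnishes a solution $\tilde u$ of $\bar\partial\tilde u=\tilde g$; the minimal $L^2$-solution, combined with fiberwise projection onto holomorphic forms, descends to a smooth section $u$ of $\mathcal F$ on $D'$ with $\bar\partial u = g$, and Fubini identifies $\|u\|^2_{h_\mathcal F}$ with the weighted $L^2$-norm of $\tilde u$.

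The decisive step, and the one I expect to be the main obstacle, is to check that the H\"ormander right-hand side is dominated by $\int_{D'}\langle(i\partial\bar\partial\phi)^{-1}g,g\rangle_{h_\mathcal F}\,e^{-\phi}$ rather than being strictly larger. This reduces to a pointwise linear-algebra inequality exploiting the horizontal structure of $\tilde g$: since each fiber-component is a holomorphic $(n,0)$-form along $X_t$, the commutator $[i\partial\bar\partial\psi_L,\Lambda_\omega]$ contributes non-negatively when paired with $\tilde g$, so on forms of this special type the inverse of $[i\partial\bar\partial\Phi,\Lambda_\omega]$ is dominated by the inverse of $[p^*(i\partial\bar\partial\phi),\Lambda_\omega]$, and the latter evaluates fiberwise to the prescribed base expression. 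Handling the singularities of $\psi_L$ off the Stein subset requires \ref{Z3} to keep fiber integrals finite, while the smoothness-in-$t$ assumption \ref{Z1} ensures that the push-forward remains smooth, so the optimal estimate produced on $D'$ is genuinely an estimate on smooth sections of $\mathcal F$. With these points in hand, \cite{DNWZ20} delivers the asserted Nakano positivity of $(\mathcal F,h_\mathcal F)$.
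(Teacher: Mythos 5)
Your plan follows the same architecture as the paper's proof: invoke the Deng--Ning--Wang--Zhou optimal $L^2$-estimate criterion, lift a compactly supported $\bar\partial$-closed $\mathcal F$-valued $(m,1)$-form to an $L$-valued $(n+m,1)$-form on the total space, solve $\bar\partial$ on $\mathcal X$ with the weight $\psi_L+p^*\phi$, and descend to a section of $\mathcal F$. That said, three points in the sketch need tightening. First, no ``canonical horizontal lift'' or Chern-connection splitting is involved or needed: an element of $\mathcal F_t=H^0(X_t,K_{X_t}\otimes L)$ is canonically a section of $(K_{\mathcal X/D}\otimes L)|_{X_t}$, so $dt\wedge f_i$ is intrinsically an $L$-valued $(n+m,0)$-form; the $\bar\partial$-closedness of the lifted form $\tilde f$ comes purely from the fiberwise holomorphicity of the $f_i$ and the $\bar\partial_t$-closedness of $f$, with no metric choices. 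Second, the ``decisive step'' you flag is not a pointwise inequality you have to establish by hand; it is exactly Demailly's monotonicity lemma (Lemma \ref{operator}): for $(n+m,1)$-forms the quantity $\langle[\theta,\Lambda_\omega]^{-1}v,v\rangle\,dV_\omega$ is monotone in $\theta$ and \emph{independent of} $\omega$, so one may compute with the product metric $\omega'=i\sum dt_j\wedge d\bar t_j+i\sum dz_j\wedge d\bar z_j$, and then the presence of the full $dt_1\wedge\cdots\wedge dt_m$ factor collapses the right-hand side to the base expression $\int_D\langle[\delta\omega_0+i\partial\bar\partial\phi,\Lambda_{\omega_0}]^{-1}f,f\rangle_t e^{-\phi}\,dV_{\omega_0}$. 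Third, and this is the genuine gap: you cannot apply H\"ormander's theorem directly with the singular weight $\psi_L$. The paper isolates this difficulty in Theorem \ref{equ}, first reducing via \ref{Z4} to the case where $\mathcal X$ is Stein and $L$ trivial so that $\psi_L$ is a globally defined quasi-psh function, then invoking Boucksom's regularization (Lemma \ref{rich}) to produce smooth decreasing approximations $\psi_{j,k}$ on an exhaustion by weakly pseudoconvex open sets that preserve the curvature lower bound, solving with the smooth approximations, and extracting a weak limit by a diagonal argument. Without this regularization-and-limit step your invocation of ``H\"ormander's $L^2$-theorem'' is not justified, since the standard statement (Lemma \ref{thm: L2 estimate Nakano}) requires a smooth Hermitian metric; \ref{Z3} plays a different role (ensuring $\mathcal F_t=H^0(X_t,K_{X_t}\otimes L)$ and finiteness of the fiber norms), not a substitute for this step.
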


\begin{ex} \label{3-ex}
We give some examples that satisfy the assumptions \ref{Z1} and \ref{Z3},see also the Section $2$ in \cite{CGP21}. We assume that there exists a divisor $E= E_1+\dots + E_N$  whose support is contained in the total space $\cX$ of $p$
such that the following requirements are fulfilled. The divisor $E$ intersects each fiber transversally, i.e., for every $t\in D$ the restriction divisor $E_t:= E|_{X_t}$ of $E$ on each fiber $X_t$ has simple normal crossings. Let $\Omega\subset \cX$ be a coordinate subset on $\cX$. We take $(z_1,\dots z_n, t _1,\dots, t_m)$ a coordinate system on $\Omega$ such that the last $m$ variables $t_1,\dots, t_m$ corresponds to the map $p$ itself and such that $z_1\dots z_r= 0$ is the local equation of $E\cap \Omega$.
\begin{enumerate}
\item The metric $h_L$ has \textbf{Poincar\'e type singularities} along $E$, i.e., its local weights $\psi_L$ on $\Omega$ can be written as
\begin{equation*}
\psi_L \equiv - \sum_{I} b_I\log\left(\big(\prod_{i\in I}|z_i|^{2m_i}\big)\big(\phi_I(z)-\log \big(\prod_{i\in I}|z_i|^{2k_i}\big)\big)\right)
\end{equation*}
modulo $\mathcal C^\infty$ functions, where $b_I$ are positive real numbers for all $I$, $m_i, k_i$ are non-negative real number. All $(\phi_I)_I$ are smooth functions on $\Omega$. The set of indexes in the sum coincides with the non-empty subsets of $\{1,\dots, r\}$.
\item  The metric $h_L$ has \textbf{logarithmic type singularities} along $E$, i.e., its local weights $\psi_L$ on $\Omega$ can be written as
\begin{equation*}
\psi_L \equiv - \sum_{I} b_I\log \big(\phi_I(z)-\log (\prod_{i\in I}|z_i|^{2k_i}) \big)
\end{equation*}
modulo $\mathcal C^\infty$ functions, where $b_I$ are positive real numbers satisfying that $b_I < 1$ for all $I$, all $k_i$ are non-negative integers and $(\phi_I)_I$ are smooth functions on $\Omega$. The set of indexes in the sum coincides with the non-empty subsets of $\{1,\dots, r\}$.
\item The metric $h_L$ has \textbf{klt type singularities} along $E$, i.e., its local weights $\psi_L$ on $\Omega$ can be written as
\begin{equation*}
\psi_L \equiv  \sum_{i\in I} a_i\log |z_i|^2
\end{equation*}
modulo $\mathcal C^\infty$ functions, where $a_i$ are real numbers satisfying that $a_i < 1$ for all $i$. The set of indexes in the sum coincides with the non-empty subsets of $\{1,\dots, r\}$.
\end{enumerate}
\end{ex}

In the rest of this section, we will prove Theorem \ref{Thm2}. One of the main results in \cite{DNWZ20} was the following characterization of Nakano positivity in terms of optimal $L^2$-estimate condition.

\begin{thm} \cite[Theorem $1.1$]{DNWZ20} \label{thm:theta-nakano text_intr}
Let $(X,\omega)$ be a  K\"{a}hler manifold of dimension $n$ with a K\" ahler metric $\omega$, and it admits a positive Hermitian holomorphic line bundle, let $(E,h)$ be a smooth Hermitian vector bundle over $X$, and a smooth $\theta\in \mathcal{C}^0(X,\wedge^{1,1}T^*_X\otimes End(E))$ such that $\theta^*=\theta$. If for any $f\in\mathcal{C}^\infty_c(X,\wedge^{n,1}T^*_X\otimes E\otimes A)$ with $\bar\partial f=0$,
and any positive Hermitian line bundle $(A,h_A)$ on $X$ with $i\Theta_{(A,h_A)}\otimes \rm Id_E+\theta>0$ on $\text{supp} f$,
there exist a $u\in L^2(X,\wedge^{n,0}T_X^*\otimes E\otimes A)$, satisfying $\bar\partial u=f$ and
$$\int_X|u|^2_{h\otimes h_A}dV_\omega\leq \int_X\langle B_{(h_A,\theta)}^{-1}f,f\rangle_{h\otimes h_A} dV_\omega,$$
provided that the right-hand side is finite,
where $B_{(h_A,\theta)}=[i\Theta_{(A,h_A)}\otimes \rm Id_E+\theta,\Lambda_\omega]$,
then $i\Theta_{(E,h)}\geq\theta$ in the sense of Nakano.
\end{thm}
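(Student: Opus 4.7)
The statement is a converse to Demailly's classical Nakano-positivity $L^2$-vanishing: given $L^2$-solvability of $\bar\partial$ with optimal bounds for arbitrary sufficiently positive auxiliary line bundles, one can force Nakano positivity of the underlying bundle. My plan is to argue by contradiction. Suppose $i\Theta_{(E,h)} \not\geq \theta$ at some point $x_0 \in X$: then there is a nonzero $u = \sum_j \partial/\partial z_j \otimes u_j \in (T_X \otimes E)_{x_0}$ with $\langle (i\Theta_{(E,h)} - \theta)(u,u)\rangle_h < 0$. The goal is to build a one-parameter family of test forms and auxiliary positive line bundles which turn this strict Nakano-negativity into a quantitative failure of the hypothesized optimal $L^2$-estimate.

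First I would fix normal coordinates $(z_1,\dots,z_n)$ centered at $x_0$ with $\omega = i\sum dz_j \wedge d\bar z_j + O(|z|)$, together with a local holomorphic frame $(e_1,\dots,e_r)$ of $E$ adapted to $h$ so that $h_{\lambda\mu} = \delta_{\lambda\mu} - \sum c_{jk\lambda\mu} z_j \bar z_k + O(|z|^3)$. Let $(A_0, h_{A_0})$ be a positive line bundle on $X$, guaranteed to exist by hypothesis, and set $A^{(k)} := A_0^{\otimes k}$ with the induced metric; for $k \gg 0$ the combined positivity $i\Theta_{A^{(k)}} \otimes \mathrm{Id}_E + \theta > 0$ holds on any preassigned compact neighborhood of $x_0$, so the auxiliary-bundle hypothesis is met. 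Next, I would construct peak-section-type test forms
\begin{equation*}
s_k := \chi(z)\bigl(\textstyle\sum_j u_j^\lambda \bar z_j\bigr)\, dz_1 \wedge \dots \wedge dz_n \otimes e_\lambda \otimes e_{A_0}^{\otimes k},
\end{equation*}
with $\chi$ a radial cutoff, and set $f_k := \bar\partial s_k$. The antiholomorphic linear factor encodes exactly the Nakano-bad direction $u$. Applying the hypothesized estimate produces $v_k$ with $\bar\partial v_k = f_k$ and $\|v_k\|_{h \otimes h_{A^{(k)}}}^2 \leq \int \langle B_{(h_{A^{(k)}},\theta)}^{-1} f_k, f_k\rangle$; since $s_k - v_k$ is holomorphic, a Bergman-projection lower bound yields $\|v_k\|^2 \geq \|s_k - P_k s_k\|^2$, where $P_k$ denotes the orthogonal projection onto the space of holomorphic $(n,0)$-forms valued in $E \otimes A^{(k)}$.

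The main obstacle is the asymptotic analysis as $k \to \infty$. Both sides of the inequality scale with the same negative power of $k$, and the strategy is to read off the leading coefficients. Using Bergman-kernel asymptotics of Tian-Zelditch-Catlin type, the leading coefficient of $\|s_k - P_k s_k\|^2$ is proportional to $-\langle (i\Theta_{(E,h)} - \theta)(u,u)\rangle_h$ at $x_0$ (strictly positive by our assumption), while the leading coefficient of the right-hand side $\int \langle B_{(h_{A^{(k)}},\theta)}^{-1} f_k, f_k\rangle$ comes from inverting $[i\Theta_{A^{(k)}} \otimes \mathrm{Id}_E + \theta, \Lambda_\omega]$ on the symbol of $f_k$ and turns out to be a controlled positive quantity. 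After the cancellation of the common power of $k$, the hypothesized optimal estimate forces $\langle (i\Theta_{(E,h)} - \theta)(u,u)\rangle_h \geq 0$ at $x_0$, contradicting the choice of $u$. The technical heart is controlling the Bergman-kernel error terms uniformly enough that the leading-order comparison is rigorous; this is precisely where the hypothesis that $X$ carries a positive line bundle enters, enabling the high-frequency quantization that makes the peak-section concentration effective.
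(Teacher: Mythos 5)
The paper itself does not prove this result; it is quoted verbatim from \cite[Theorem 1.1]{DNWZ20}, so there is no ``paper's own proof'' to compare against. Evaluating your proposal on its own terms: the high-level strategy (argue by contradiction, localize at a point where Nakano positivity fails, build concentrating test forms, and compare the two sides of the hypothesized estimate) is indeed the right shape and matches the spirit of the argument in \cite{DNWZ20}. However, there is a concrete error in the bookkeeping, and the machinery you invoke is heavier than what the hypotheses can support.

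The specific error: you claim that ``the leading coefficient of $\|s_k - P_k s_k\|^2$ is proportional to $-\langle (i\Theta_{(E,h)} - \theta)(u,u)\rangle_h$.'' This cannot be correct, because the Bergman projection $P_k$ is determined entirely by the $L^2$ inner product induced by $h\otimes h_{A^{(k)}}$ and $\omega$; the tensor $\theta$ plays no role whatsoever in the construction of $P_k$, hence cannot appear in any asymptotic expansion of $\|s_k - P_k s_k\|^2$. Moreover, the leading order of both sides of the estimate is the same power of $k$ with the same coefficient (both are $\sim c\,|u|^2 k^{-n-1}$ up to universal constants), so the leading-order comparison gives nothing: the contradiction must be extracted from a \emph{matching of subleading terms}, where $i\Theta_{(E,h)}$ enters the left side (through the Bergman/Gaussian localization) and $\theta$ enters the right side (through $B^{-1}$). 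Your proposal conflates the two sides and does not actually exhibit the cancellation mechanism. A secondary concern: Tian--Zelditch--Catlin asymptotics are generally proved for compact $X$ (or under completeness/boundedness hypotheses), while the statement here is for an arbitrary K\"ahler manifold that merely admits a positive line bundle. The argument in \cite{DNWZ20} avoids this by taking $A$ trivial on a local chart with an explicit Gaussian weight $e^{-a|z|^2}$ (plus a constant), computing both sides directly as $a\to\infty$; this sidesteps global Bergman-kernel theory entirely and is the more robust route. If you wish to keep your packaging, you would need a \emph{local}, metric-rescaled Bergman-kernel expansion uniform in $k$ near $x_0$, which is essentially a repackaging of that Gaussian computation anyway.
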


\begin{rem}\label{rem:reduce to trivial bundle}
As remark 1.2 in \cite{DNWZ20} said, if $X$ admits a strictly plurisubharmonic function, we can take $A$ to be the trivial bundle (with nontrivial metrics).
\end{rem}

The following lemmas are important for solving the $\dbar$-equation with $L^2$ estimate.

\begin{lem} \cite[Lemma 3.2]{Dem82} \cite[Appendix]{DNWZ20} \label{operator}
Let $X$ be a complex manifold with dimension $n$, assume that $\theta \in \wedge^{1,1}T_X^*$ be a positive $(1,1)$-form, and fix an integer $q\geq 1$.
\begin{enumerate}
\item  for each form $u \in \wedge^{n,q}T_X^*, \langle [\theta, \Lambda_{\omega}]^{-1} u, u \rangle dV_{\omega}$ is non-increasing with respect to $\theta$ and $\omega$;
\item  for each form $u \in \wedge^{n,1}T_X^*, \langle [\theta, \Lambda_{\omega}]^{-1} u, u \rangle dV_{\omega}$ is independent with respect to $\omega$.
\end{enumerate}
\end{lem}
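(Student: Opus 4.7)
The plan is to argue pointwise at each $x\in X$ and reduce both claims to a linear-algebra statement on $\wedge^{n,q}T^*_{X,x}$. The essential tool is simultaneous diagonalization: choose a frame $(e_1,\ldots,e_n)$ of $T^*_X$ at $x$ that is $\omega$-orthonormal and diagonalizes $\theta$, so that $\omega=i\sum e_j\wedge\bar e_j$ and $\theta=i\sum\gamma_j\,e_j\wedge\bar e_j$, with $\gamma_j>0$ by positivity of $\theta$. Setting $\Omega=e_1\wedge\cdots\wedge e_n$ and writing $u=\sum_{|J|=q}u_J\,\Omega\wedge\bar e_J$, the standard commutator computation (see \cite{Dem12}) gives that $[\theta,\Lambda_\omega]$ acts diagonally as
\[
[\theta,\Lambda_\omega]u=\sum_{|J|=q}\Bigl(\sum_{j\in J}\gamma_j\Bigr)u_J\,\Omega\wedge\bar e_J,
\]
so that the object we care about has the explicit form
\[
\langle[\theta,\Lambda_\omega]^{-1}u,u\rangle\,dV_\omega=\sum_{|J|=q}\frac{|u_J|^2}{\sum_{j\in J}\gamma_j}\cdot i^{n^2}\Omega\wedge\bar\Omega.
\]

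For part (2), when $q=1$ each $J$ is a singleton $\{j\}$, so the right-hand side becomes $i^{n^2}\sum_j|u_j|^2\gamma_j^{-1}\,\Omega\wedge\bar\Omega$. I would then rewrite this coordinate-freely: if $u=\sum u_j\,dz_1\wedge\cdots\wedge dz_n\wedge d\bar z_j$ in arbitrary holomorphic coordinates and $(\theta^{jk})$ denotes the inverse of the coefficient matrix of $\theta$, a routine change-of-basis calculation identifies the expression with $i^{n^2}\sum_{j,k}\theta^{jk}u_j\bar u_k\,dz\wedge d\bar z$. Since this depends only on $\theta$, part (2) follows immediately.

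For part (1), I would use the variational principle $\langle B^{-1}u,u\rangle=\sup_v\bigl(2\,\mathrm{Re}\langle u,v\rangle-\langle Bv,v\rangle\bigr)$ with $B=[\theta,\Lambda_\omega]$, which reduces monotonicity of $B^{-1}$ to monotonicity of $B$ (acting as a scalar on each diagonal component after factoring out $i^{n^2}\Omega\wedge\bar\Omega$). Monotonicity in $\theta$ is immediate from the diagonal formula, since enlarging $\theta$ enlarges every $\gamma_j$ and hence each eigenvalue $\sum_{j\in J}\gamma_j$ of $B$. The main obstacle I expect is the $\omega$-dependence: increasing $\omega$ simultaneously shrinks the $\gamma_j$'s (eigenvalues of $\theta$ relative to $\omega$) and enlarges $dV_\omega$, so the two effects pull in opposite directions and must cancel in the right way. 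I would handle this by choosing an $\omega$-orthonormal frame that simultaneously diagonalizes a second K\"ahler form $\omega'\geq\omega$ with eigenvalues $\lambda_j\geq 1$, tracking both the new eigenvalues of $[\theta,\Lambda_{\omega'}]$ and the volume-form ratio $dV_{\omega'}/dV_\omega=\prod\lambda_j$, and verifying directly (or by differentiating along the affine path $\omega_t=\omega+t(\omega'-\omega)$ and showing the $t$-derivative is $\leq 0$) that the full top-form is non-increasing in $t$.
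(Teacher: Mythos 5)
The plan is essentially the right one, and parts (2) and the $\theta$-monotonicity piece of (1) are fine once the variational principle is invoked: if $\theta'\ge\theta$ then $[\theta'-\theta,\Lambda_\omega]\ge 0$ on $(n,q)$-forms, so $\langle[\theta',\Lambda_\omega]v,v\rangle\ge\langle[\theta,\Lambda_\omega]v,v\rangle$ and the sup decreases. But your phrase ``immediate from the diagonal formula, since enlarging $\theta$ enlarges every $\gamma_j$'' is not quite the right justification --- you cannot diagonalize $\theta$ and $\theta'$ simultaneously in the same $\omega$-orthonormal frame, so it is really the operator inequality on $[\cdot,\Lambda_\omega]$, not the eigenvalue picture, that carries the argument.

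The genuine gap is in the $\omega$-monotonicity. You begin by fixing a frame that is $\omega$-orthonormal and diagonalizes $\theta$, then later say you would ``choose an $\omega$-orthonormal frame that simultaneously diagonalizes $\omega'$'' and track ``the new eigenvalues of $[\theta,\Lambda_{\omega'}]$''. These two choices are incompatible: three Hermitian forms $\omega,\omega',\theta$ cannot in general be simultaneously diagonalized, and once you pass to the $(\omega,\omega')$-diagonal frame the operator $[\theta,\Lambda_{\omega'}]$ no longer acts by the scalar $\sum_{j\in J}\gamma_j$ on each component, so there is no ``diagonal formula'' to appeal to. The way to close this is to abandon the diagonal picture for $\theta$ entirely and use, inside the variational principle, the general non-diagonal identity
\[
\langle[\theta,\Lambda_\omega]v,v\rangle_\omega \;=\;\sum_{|S|=q-1}\ \sum_{j,k\notin S}\theta_{jk}\,v_{jS}\,\overline{v_{kS}},
\]
valid for $v=\sum_{|J|=q}v_J\,\Omega\wedge\bar e_J$ in an $\omega$-orthonormal frame, where each inner sum is a nonnegative Hermitian form because it is a principal submatrix of $\theta>0$. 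Now pick the frame that diagonalizes $\omega$ and $\omega'$ simultaneously, so $\omega'=i\sum\lambda_j e_j\wedge\bar e_j$ with $\lambda_j\ge 1$, and in the Rayleigh quotient $\sup_v |\langle u,v\rangle|^2/\langle Bv,v\rangle\cdot dV$ perform the substitution $w_J=v_J\prod_{j\in J}\lambda_j^{-1}$. A direct bookkeeping of the $\lambda_j$-factors in $\langle u,v\rangle_{\omega'}$, in the formula above for $\langle[\theta,\Lambda_{\omega'}]v,v\rangle_{\omega'}$, and in $dV_{\omega'}=\prod_k\lambda_k\,dV_\omega$ shows the numerator is unchanged while the $\omega'$-denominator becomes
\[
\sum_{|S|=q-1}\Bigl(\prod_{s\in S}\lambda_s\Bigr)\sum_{j,k\notin S}\theta_{jk}\,w_{jS}\,\overline{w_{kS}} \;\ge\;\sum_{|S|=q-1}\ \sum_{j,k\notin S}\theta_{jk}\,w_{jS}\,\overline{w_{kS}},
\]
since each $\prod_{s\in S}\lambda_s\ge 1$ and each $S$-term is nonnegative. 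Hence the supremum decreases, giving $\omega$-monotonicity. For $q=1$ there is only $S=\emptyset$, the factor $\prod_{s\in S}\lambda_s$ is identically $1$, and the two sides agree --- which recovers part (2). Your ``differentiate along the path $\omega_t$'' alternative could in principle also work, but as sketched it suffers from the same issue (you would again need to control the derivative of $[\theta,\Lambda_{\omega_t}]^{-1}$ without a diagonal structure for $\theta$), so the substitution argument above is the cleaner fix.
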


\noindent We need the Richberg-type global regularization result of unbounded quasi-psh functions. Recall an upper semi-continuous function $\phi: X \rightarrow [-\infty, +\infty)$ on a complex manifold $X$ is quasi-psh if it is locally of the form $\phi = u + f$ where $u$ is plurisubharmonic(psh) function and $f$ is a smooth function.

\begin{lem} \cite[Theorem $3.8$]{Bou17} \label{rich}
Let $\phi$ be a quasi-psh function on a complex $X$, and assume given finitely many closed, real $(1,1)$-forms $\theta_{\alpha}$ such that $\theta_{\alpha} + i \ddbar \phi \geq 0$ for all $\alpha$. Suppose either that $X$ is strongly pseudoconvex, or that $\theta_{\alpha}>0$ for all $\alpha$. Then we can find a sequence $\phi_j \in \mathcal{C}^{\infty}(X)$ with the following properties:
\begin{enumerate}
\item  $\phi_j$ converges point-wise to $\phi$;
\item  for each relatively compact open subset $U \Subset X$, there exists $j_U\gg 1$ such that the sequence $(\phi_j)$ becomes decreasing with $\theta_{\alpha} + i \ddbar \phi_j >0$ for each $\alpha$ when $j\geq j_U$.
\end{enumerate}
\end{lem}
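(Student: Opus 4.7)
The plan is to combine local convolution smoothing with a global patching via Demailly's regularized maximum, using the strict-positivity hypothesis (either the strictly psh exhaustion provided by strong pseudoconvexity, or the strict positivity of the forms $\theta_\alpha$ themselves) to absorb the small curvature losses that regularization unavoidably introduces.

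First I would handle the local picture. On a small coordinate ball $U\subset X$ each closed real $(1,1)$-form $\theta_\alpha$ admits a smooth potential $\theta_\alpha=\idd\psi_\alpha^U$, so $\phi+\psi_\alpha^U$ is psh on $U$. Convolving with a standard approximate identity $\rho_\varepsilon$ yields smooth psh functions decreasing to $\phi+\psi_\alpha^U$ as $\varepsilon\downarrow 0$ on $U'\Subset U$, and subtracting off $\psi_\alpha^U\ast\rho_\varepsilon$ we obtain $\phi_{U,\varepsilon}:=\phi\ast\rho_\varepsilon$ with
\[
\theta_\alpha+\idd\phi_{U,\varepsilon}\geq\theta_\alpha-\theta_\alpha\ast\rho_\varepsilon\geq-\eta(\varepsilon)\,\omega_0\quad\textrm{on }U',
\]
for any background K\"ahler form $\omega_0$, where $\eta(\varepsilon)\downarrow 0$ by uniform continuity of the smooth $\theta_\alpha$. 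Locally, smoothing thus succeeds up to an arbitrarily small positivity loss.

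Next I would globalize by the regularized-maximum gluing. Given a locally finite cover $\{U_\nu\}$ of $X$ with local approximants $\phi_{\nu,\varepsilon}$, Demailly's $M_\delta(t_1,\ldots,t_k)$ is smooth, symmetric, componentwise nondecreasing, convex in each argument, equals $\max(t_i)$ off a $\delta$-neighborhood of the diagonals, and satisfies $\max(t_i)\leq M_\delta\leq\max(t_i)+\delta$. Convexity and monotonicity propagate the local curvature lower bound to the glued function,
\[
\theta_\alpha+\idd\,M_\delta\bigl(\phi_{\nu_1,\varepsilon}+c_{\nu_1},\ldots,\phi_{\nu_k,\varepsilon}+c_{\nu_k}\bigr)\geq-\eta(\varepsilon)\,\omega_0,
\]
once the shifts $c_\nu$ are chosen on a countable exhaustion of $X$ so that the right local approximant is selected near each point. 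This yields a globally smooth $\phi_j\in\mathcal{C}^\infty(X)$ approximating $\phi$ with a residual curvature defect $\eta_j\to 0$.

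Finally I would absorb the defect and extract the two advertised properties. In the strongly pseudoconvex case, fix a smooth strictly psh exhaustion $\rho$; on $U\Subset X$ one has $\idd\rho\geq c_U\,\omega_0$, so replacing $\phi_j$ by $\phi_j+\lambda_j\rho$ with $\lambda_j\to 0$ chosen so that $\lambda_j c_U\gg\eta_j$ forces $\theta_\alpha+\idd(\phi_j+\lambda_j\rho)>0$ for every $\alpha$; in the alternative case $\theta_\alpha>0$ the required slack already lives in $\theta_\alpha$ and no correction is needed. A diagonal choice $(\varepsilon_j,\delta_j,\lambda_j)\to 0$ then gives pointwise convergence $\phi_j\to\phi$, while the monotonicity of the convolutions in $\varepsilon$ together with the componentwise monotonicity of $M_\delta$ makes $(\phi_j)$ eventually decreasing on each $U\Subset X$. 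The main obstacle is precisely this final bookkeeping: threading $(\varepsilon_j,\delta_j,\lambda_j)$ so that simultaneously every $\theta_\alpha+\idd\phi_j$ is strictly positive, $(\phi_j)$ is genuinely (not merely approximately) decreasing on compacts, and $\phi_j\to\phi$ pointwise is the delicate balancing act at the heart of the Richberg--Demailly construction.
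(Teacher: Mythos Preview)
The paper does not prove this lemma; it is quoted verbatim from Boucksom's notes \cite[Theorem~3.8]{Bou17} and used as a black box in the proof of Theorem~\ref{equ}. Your sketch is precisely the Richberg--Demailly regularization argument that underlies the cited result (local convolution smoothing, patching via the regularized maximum $M_\delta$, and absorption of the curvature loss by the strictly psh exhaustion or by the slack in $\theta_\alpha$), so there is nothing to compare against in this paper itself.

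One genuine caution in your outline: the eventual monotonicity on compacta does not follow simply from ``monotonicity of the convolutions in $\varepsilon$ together with the componentwise monotonicity of $M_\delta$,'' because the shifts $c_\nu$ and the parameter $\delta$ also vary with $j$, and $M_\delta$ is not monotone in $\delta$. In the actual Richberg construction one fixes the cover and the shifts once and for all on each exhausting compact, and then lets only $\varepsilon$ and the correction $\lambda_j\rho$ decrease; the diagonal extraction across the exhaustion is what produces a single globally defined sequence. Your final paragraph flags exactly this bookkeeping as the delicate point, which is fair, but as written the justification for property~(2) is not yet an argument.
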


\begin{lem}\cite[Theorem 4.5]{Dem12} \label{thm: L2 estimate Nakano}
Let $(X,\omega)$ be a complete K\"ahler manifold, with a K\"ahler metric which is not necessarily complete. Let $(E,h)$ be a Hermitian vector bundle of rank $r$ over $X$, and assume that the curvature operator $B:=[i\Theta_{(E,h)},\Lambda_\omega]$ is semi-positive definite everywhere on $\wedge^{n,q}T_X^*\otimes E$, for some $q\geq 1$. Then for any form $g\in L^2(X,\wedge^{n,q}T^*_{X}\otimes E)$ satisfying $\bar{\partial}g=0$ and $\int_X\langle B^{-1}g,g\rangle dV_\omega<+\infty$, there exists $f\in L^2(X,\wedge^{n,q-1}T^*_X\otimes E)$ such that $\bar{\partial}f=g$ and $$\int_X|f|^2dV_\omega\leq \int_X\langle B^{-1}g,g\rangle dV_\omega.$$
\end{lem}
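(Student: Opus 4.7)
The plan is to follow the classical Hörmander-Andreotti-Vesentini $L^2$-approach, adapted to the Hermitian vector bundle setting: the semi-positivity of $B$, combined with the Bochner-Kodaira-Nakano-Akizuki identity, yields a Cauchy-Schwarz type duality inequality from which the solution $f$ is produced via Hahn-Banach extension and Riesz representation. The main a priori bound comes from the Bochner identity on compactly supported forms; the real subtlety is the passage to the full domain of $\bar\partial^*$ and the handling of a possibly non-complete Kähler metric $\omega$.

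I would first reduce to the case where $\omega$ is itself complete. The hypothesis supplies some complete Kähler metric $\hat\omega$ on $X$; one then works with the family $\omega_\epsilon := \omega + \epsilon\,\hat\omega$ of complete Kähler metrics for $\epsilon > 0$. By Lemma \ref{operator}, on $(n,q)$-forms the integrand $\langle B_{\omega_\epsilon}^{-1} g, g\rangle\, dV_{\omega_\epsilon}$ is monotone non-increasing as $\omega$ grows, so the bound $\int_X \langle B^{-1}g, g\rangle\, dV_\omega < \infty$ transfers uniformly to each $\omega_\epsilon$. Solving the equation for every $\epsilon$ with a uniform $L^2$-estimate and extracting a weak $L^2$-limit as $\epsilon \to 0$ then reduces matters to the complete case.

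Assuming $(X,\omega)$ is complete, the Bochner-Kodaira-Nakano-Akizuki identity reads, for every smooth compactly supported $(n,q)$-form $\varphi$ valued in $E$,
\begin{equation*}
\|\bar\partial \varphi\|^2 + \|\bar\partial^* \varphi\|^2 \;=\; \|D'^{h}\varphi\|^2 + \langle B\varphi,\varphi\rangle \;\geq\; \langle B\varphi,\varphi\rangle.
\end{equation*}
Completeness of $\omega$ permits the Andreotti-Vesentini density argument (using plateau cut-off functions with gradient tending to zero), so this inequality extends to all $\varphi$ in the graph closure of smooth compactly supported forms, hence to $\mathrm{Dom}(\bar\partial)\cap\mathrm{Dom}(\bar\partial^*)$. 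Given $g$ with $\bar\partial g = 0$ and $C := \int_X\langle B^{-1}g,g\rangle\, dV_\omega < \infty$, decompose any $v\in\mathrm{Dom}(\bar\partial^*)$ orthogonally as $v = v_1 + v_2$ with $v_1\in\overline{\ker\bar\partial}$ and $v_2\in(\ker\bar\partial)^\perp$. Since $v_2$ is orthogonal to $\mathrm{Im}(\bar\partial)\subset \ker\bar\partial$ one has $v_2\in\ker\bar\partial^*$, hence $\bar\partial^* v = \bar\partial^* v_1$; meanwhile $g\in\ker\bar\partial$ gives $\langle g,v\rangle = \langle g,v_1\rangle$. The pointwise Cauchy-Schwarz inequality and the extended Bochner bound (with $\bar\partial v_1 = 0$) now combine into
\begin{equation*}
|\langle g,v\rangle|^2 \;\leq\; C\cdot \langle Bv_1,v_1\rangle \;\leq\; C\cdot\bigl(\|\bar\partial v_1\|^2 + \|\bar\partial^* v_1\|^2\bigr) \;=\; C\cdot\|\bar\partial^* v\|^2.
\end{equation*}

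Consequently $\bar\partial^* v \mapsto \langle v,g\rangle$ is a well-defined bounded linear functional on $\mathrm{Im}(\bar\partial^*)\subset L^2_{(n,q-1)}(X,E)$ of norm at most $\sqrt{C}$; Hahn-Banach extension followed by Riesz representation yields $f\in L^2_{(n,q-1)}(X,E)$ with $\|f\|^2\leq C$ satisfying $\langle f,\bar\partial^* v\rangle = \langle g,v\rangle$ for every $v\in\mathrm{Dom}(\bar\partial^*)$, i.e.\ $\bar\partial f = g$ distributionally. The step I expect to demand the most care is the graph-density argument that promotes the Bochner inequality from smooth compactly supported forms to all of $\mathrm{Dom}(\bar\partial)\cap\mathrm{Dom}(\bar\partial^*)$: completeness of $\omega_\epsilon$ is exactly what is needed to produce a sequence of cut-off functions whose first derivatives tend to zero in $L^\infty$, which is why the perturbation $\omega_\epsilon$ is introduced at the outset. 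Once this density is in hand, Lemma \ref{operator} renders the passage $\epsilon\to 0$ routine by dominated convergence.
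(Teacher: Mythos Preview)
The paper does not give its own proof of this lemma: it is quoted verbatim from \cite[Theorem 4.5]{Dem12} and used as a black box in the proof of Theorem \ref{equ}. Your outline is precisely the classical H\"ormander--Andreotti--Vesentini argument that appears in Demailly's reference, including the perturbation $\omega_\epsilon=\omega+\epsilon\hat\omega$ to reduce to a complete metric, the appeal to the monotonicity of $\langle B^{-1}g,g\rangle\,dV_\omega$ in $\omega$ (your Lemma \ref{operator}), the Bochner--Kodaira--Nakano inequality, the density of $\mathcal{C}^\infty_c$ in the graph norm via completeness, and the Hahn--Banach/Riesz conclusion. One small correction: in the Bochner identity on $(n,q)$-forms the surviving first-order term is $\|(D'^h)^*\varphi\|^2$, not $\|D'^h\varphi\|^2$ (the latter vanishes for bidegree reasons), but this does not affect the inequality $\|\bar\partial\varphi\|^2+\|\bar\partial^*\varphi\|^2\geq\langle B\varphi,\varphi\rangle$ you use.
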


\begin{thm} \label{equ}
Let $p:\cX\to D$ be a holomorphic proper fibration from a $(n+m)$-dimensional K\"ahler manifold $\cX$ onto the bounded pseudoconvex domain $D\subset \mathbb C^{m}$, and let $(L,h_L=e^{-\psi})$ be a holomorphic line bundle endowed with a possibly singular hermitian metric $h_L$ with local weight $\psi$ and curvature current $\xu \Theta_{(L,h_L)}\geq \delta\cdot p^{\ast}\omega_0$ for the standard K\"ahler form $\omega_0$ on $D$ and some positive number $\delta$. We assume that the K\"ahler manifold $\cX$ contains a Stein Zariski open subset and $\phi$ be any smooth strictly plurisubharmonic function on $D$. 
If
$v \in L^2_{loc}(\cX, \wedge^{n,1} T^{\ast}_{\cX} \otimes L)$
satisfying $\dbar v=0$ and
$$ \int_{\cX} \langle [\delta p^*\omega_0 + i \ddbar p^{\ast}\phi, \Lambda_{\omega}]^{-1} v, v \rangle_{\psi} e^{-p^{\ast}\phi} dV_{\omega} < \infty.
$$
Then $v= \dbar u$ for some $u\in L^2(\cX, \wedge^{n,0} T^{\ast}_{\cX} \otimes L)$ with satisfies
\begin{equation} \label{eq-optimal}
\int_{\cX} |u|^2_{\psi} e^{-p^{\ast}\phi} dV_{\omega} \leq \int_{\cX} \langle [\delta p^*\omega_0 + i \ddbar p^{\ast}\phi, \Lambda_{\omega}]^{-1} v, v \rangle_{\psi} e^{-p^{\ast}\phi} dV_{\omega}.
\end{equation}
Here the subscript $|\cdot|^2_{\psi}$ 
means the inner product with respect to metric weight $\psi$ of $L$.
\end{thm}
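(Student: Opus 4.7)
The plan is to realize the desired solution $u$ as a weak limit of solutions produced by Demailly's classical $L^2$ estimate on complete K\"ahler manifolds (Lemma \ref{thm: L2 estimate Nakano}), applied to the twisted line bundle $(L, h_L\otimes p^{\ast}e^{-\phi})$ whose curvature current satisfies $i\Theta\geq \delta p^{\ast}\omega_0+i\partial\bar\partial p^{\ast}\phi$. A key observation that frees up the construction is that both sides of \eqref{eq-optimal} are intrinsically independent of the auxiliary K\"ahler metric: for an $(n,0)$-valued form, $|u|^2_\psi\,dV_\omega=c_n u\wedge\bar u\, e^{-\psi}$ is independent of $\omega$, while for the $(n,1)$-form $v$ the integrand $\langle[\theta,\Lambda_\omega]^{-1}v,v\rangle\,dV_\omega$ is independent of $\omega$ by Lemma \ref{operator}(2). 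Thus we are free to switch $\omega$ with any convenient complete K\"ahler metric without altering the target estimate.

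First, let $\mathcal{X}^\circ\subset\mathcal{X}$ denote the Zariski open Stein subset. Since $\mathcal{X}^\circ$ is Stein it carries a strictly plurisubharmonic exhaustion, from which one constructs in a standard way a complete K\"ahler metric $\widetilde\omega$ on $\mathcal{X}^\circ$. Next, I regularize the singular weight $\psi$: applying Lemma \ref{rich} with the closed real $(1,1)$-form $\theta:=-\delta p^{\ast}\omega_0$ and the quasi-psh function $\psi$, whose sum satisfies $\theta+i\partial\bar\partial\psi\geq 0$ by hypothesis, I obtain a sequence of smooth functions $\psi_j$ decreasing pointwise to $\psi$ such that on each relatively compact open $U\Subset\mathcal{X}^\circ$ one has $i\partial\bar\partial\psi_j\geq\delta p^{\ast}\omega_0$ for all sufficiently large $j$.

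Now exhaust $\mathcal{X}^\circ$ by $U_\nu\Subset\mathcal{X}^\circ$ and apply Lemma \ref{thm: L2 estimate Nakano} on $(U_\nu,\widetilde\omega)$ with smooth weight $\psi_j+p^{\ast}\phi$ on $L$ (for $j$ large relative to $\nu$). This yields $u_{j,\nu}\in L^2(U_\nu,\wedge^{n,0}T^{\ast}_{\mathcal{X}}\otimes L)$ solving $\bar\partial u_{j,\nu}=v$ with
\begin{equation*}
    \int_{U_\nu}|u_{j,\nu}|^2_{\psi_j}e^{-p^{\ast}\phi}\,dV_{\widetilde\omega}\leq \int_{U_\nu}\langle B_j^{-1}v,v\rangle_{\psi_j}e^{-p^{\ast}\phi}\,dV_{\widetilde\omega},
\end{equation*}
where $B_j=[i\partial\bar\partial(\psi_j+p^{\ast}\phi),\Lambda_{\widetilde\omega}]$. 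By Lemma \ref{operator}(1) the right-hand integrand is dominated by $\langle[\delta p^{\ast}\omega_0+i\partial\bar\partial p^{\ast}\phi,\Lambda_{\widetilde\omega}]^{-1}v,v\rangle$, which by Lemma \ref{operator}(2) equals its analogue for $\omega$; since $\psi_j\geq\psi$, this bound is in turn controlled by the finite quantity in the hypothesis, uniformly in $j$ and $\nu$.

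A diagonal argument then extracts a weakly $L^2$-convergent subsequence $u_{j,\nu}\rightharpoonup u$ on $\mathcal{X}^\circ$ with $\bar\partial u=v$. Combining Fatou's lemma on the left with monotone convergence $e^{-\psi_j}\nearrow e^{-\psi}$ on the right delivers the estimate on $\mathcal{X}^\circ$; since $\mathcal{X}\setminus\mathcal{X}^\circ$ is a proper analytic subset, the $L^2$ bound allows $u$ to extend across it to a global solution on $\mathcal{X}$ by a standard removable singularity argument for $L^2$ holomorphic forms. The main obstacle I anticipate is verifying that the \emph{optimal} constant survives the regularization and the change of K\"ahler metrics; this is exactly where Lemma \ref{operator} is indispensable, as it decouples the right-hand side from the choice of complete metric and lets us replace the strict regularized curvature by the sharp lower bound $\delta p^{\ast}\omega_0+i\partial\bar\partial p^{\ast}\phi$ without loss.
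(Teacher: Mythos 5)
Your proposal follows essentially the same route as the paper: restrict to a Stein subset, regularize the singular weight via the Richberg-type Lemma \ref{rich}, solve $\bar\partial$ on an exhaustion via Demailly's complete K\"ahler $L^2$ estimate (Lemma \ref{thm: L2 estimate Nakano}), use the monotonicity/invariance properties of $[\theta,\Lambda_\omega]^{-1}$ from Lemma \ref{operator} to keep the right-hand side fixed, and pass to the weak limit via a diagonal argument, finally invoking $L^2$-removability of the analytic subset $\cX\setminus\cX^\circ$.

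There is one genuine gap. You feed Lemma \ref{rich} the pair $(\theta,\psi)$ with $\theta=-\delta p^{\ast}\omega_0$ and the ``quasi-psh function $\psi$.'' But $\psi$ is only a \emph{local} weight of the metric $h_L$; it is not a globally defined function on $\cX^\circ$ unless $L|_{\cX^\circ}$ is trivial, and Steinness of $\cX^\circ$ alone does not force that (line bundles on a Stein manifold are classified by $H^2(\cX^\circ,\mathbb{Z})$, which may be nonzero). Lemma \ref{rich} is a statement about functions, not metrics on nontrivial line bundles, so your regularization step is not yet justified. The paper resolves this by one more reduction: on the Stein manifold $\cX^\circ$, pick a nonzero holomorphic section $s$ of $L$ (which exists by Cartan's Theorem A), remove the hypersurface $H=\{s=0\}$; the complement is again Stein and $L$ is trivial there, so $\psi$ becomes a genuine global quasi-psh function, and the $L^2$-Riemann extension principle lets you recover the equation and the estimate across $H$ afterwards. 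Insert this trivialization step before applying Lemma \ref{rich} and your argument closes.

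One smaller point of bookkeeping: your phrase ``Fatou's lemma on the left with monotone convergence $e^{-\psi_j}\nearrow e^{-\psi}$ on the right'' mislabels where each ingredient acts. The right-hand side is bounded \emph{uniformly} in $j$ precisely because $e^{-\psi_j}\le e^{-\psi}$ (decreasing $\psi_j$); the limit on the left uses weak lower semicontinuity of the $L^2$-norm at each fixed level $\psi_l$, and only \emph{then} monotone convergence $e^{-\psi_l}\nearrow e^{-\psi}$ as $l\to\infty$. The paper's two-index $(j,k)$ bookkeeping makes this cleaner, and you should also specify that the exhausting sets $U_\nu$ are sublevel sets of a strictly psh exhaustion so that each is Stein and hence admits a complete K\"ahler metric, as required by Lemma \ref{thm: L2 estimate Nakano}.
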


\begin{proof}
Firstly, we note $h_L \cdot e^{-p^{\ast}\phi}$ is also the singular metric of $L$ because $p^{\ast}\phi$ be a globally function on $\cX$. Therefore we have $i \Theta_{(L, h_L)} + i \ddbar p^{\ast}\phi \geq \delta p^*\omega_0 + i \ddbar p^{\ast}\phi$ in the sense of currents. To prove the claim, we need an $L^2$-version of the Riemann extension principal. This is to say, if $\alpha\in L^2_{loc}$ be a $L$-valued form on a complex manifold $X$ such that $\dbar \alpha=\beta$ outside a closed analytic subset $A \subset X$, then $\dbar \alpha = \beta$ holds on the whole $X$. On the other hand, if $X$ is a Stein manifold and $L$ be a line bundle on $X$, there exists a hypersurface $H \subset X$ such that $X \setminus H$ is Stein and $L$ is trivial on $X\setminus H$.
Thanks to this, we can assume that $\cX$ is Stein and $L$ is trivial on $\cX$. Then the metric $h_L= e^{-\psi}$ and its local weight $\psi$ is globally defined on $\cX$. Now we can use the global regularization of unbounded quasi-psh. 

By the Lemma \ref{rich}, we may find an exhaustion of $\cX$ by weakly pseudoconvex open subsets $\Omega_j$ such that $\psi_j = \psi|_{\Omega_j}$ is the decreasing limit of sequence $\psi_{j,k} \in \mathcal C^{\infty}(\Omega_j)$ with
$$ i \ddbar \psi_{j,k} \geq \delta p^*\omega_0  \Longrightarrow  i \ddbar \psi_{j,k} + i \ddbar p^{\ast}\phi \geq \delta p^*\omega_0 + i \ddbar p^{\ast}\phi.
$$
Because weakly pseudoconvex manifold admits a complete K\"ahler metric, on $\Omega_j$ we can solve the classical $\dbar$ equation with the $L^2$-estimate as Lemma \ref{thm: L2 estimate Nakano}, i.e., there exist $u_{j,k} \in L^2(\Omega_j, \wedge^{n,0} T^{\ast}_{\Omega_j} \otimes L)$ such that $\dbar u_{j,k} = v$ on $\Omega_j$ and
\begin{align}
\nonumber \int_{\Omega_j} |u_{j,k}|^2_{\psi_{j,k}} e^{-p^{\ast}\phi} dV_{\omega} &=  \int_{\Omega_j} |u_{j,k}|^2 e^{-\psi_{j,k}} e^{-p^{\ast}\phi} dV_{\omega} \\
\nonumber &\leq \int_{\Omega_j} \langle [\delta p^*\omega_0 + i \ddbar p^{\ast}\phi, \Lambda_{\omega}]^{-1} v, v \rangle_{\psi_{j,k}} e^{-p^{\ast}\phi} dV_{\omega} \\
\nonumber&\leq \int_{\Omega_j} \langle [\delta p^*\omega_0 + i \ddbar p^{\ast}\phi, \Lambda_{\omega}]^{-1} v, v \rangle_{\psi_{j}} e^{-p^{\ast}\phi} dV_{\omega} \\
\nonumber&\leq \int_{\cX} \langle [\delta p^*\omega_0 + i \ddbar p^{\ast}\phi, \Lambda_{\omega}]^{-1} v, v \rangle_{\psi} e^{-p^{\ast}\phi} dV_{\omega} \\
\nonumber&= C (\rm constant).
\end{align}
The second inequality because of $\psi_j$ is the decreasing limit of a sequence $\psi_{j,k}$. By monotonicity of $(\psi_{j,k})_k$, we know the integration $\int_{\Omega_j} |u_{j,k}|^2 e^{-p^{\ast}\phi-\psi_{j,l}} dV_{\omega} \leq M$ for $k\geq l$, this shows in particular that $(u_{j,k})_k$ is bounded in $L^2(\Omega_j, e^{-p^{\ast}\phi-\psi_{j,l}})$. After passing to the subsequence, we thus assume that $u_{j,k}$ converges weakly in $L^2(\Omega_j, e^{-p^{\ast}\phi-\psi_{j,l}})$ to $u_j$, which may further be assumed to be the same for all $l$, by a diagonal argument. Now we have $\dbar u_j = v$, and $\int_{\Omega_j} |u_{j}|^2 e^{-p^{\ast}\phi-\psi_{j,l}} dV_{\omega} \leq M$ for all $l$, therefore $\int_{\Omega_j} |u_{j}|^2 e^{-p^{\ast}\phi-\psi} dV_{\omega} \leq M$ by monotone convergence of $\psi_{j,l} \rightarrow \psi$. Once again by a diagonal argument, we may arrange that $u_j \rightarrow u$ weakly in $L^2(K, e^{-p^{\ast}\phi-\psi})$ for each compact subset $K \subset \cX$, and finally we are led to the desired conclusion.
\end{proof}

Here, the operator $[\delta p^*\omega_0 + i \ddbar p^{\ast}\phi, \Lambda_{\omega}]$ is semi-positive on $\cX$ but positive in the $t\in\Omega$ direction, therefore the integral finite condition for $v$ is satisfied if all coefficients of $v$ depends on $t$.
We can now prove Theorem \ref{Thm2} by following Deng--Ning--Wang--Zhou's approach.
\begin{thm}\label{thm: direct image-optimal L2 estimate}
Under the set-up of case $Z$, the Hermitian holomorphic vector bundle $(\mathcal F, \|\cdot\|_{h_{\mathcal F}})$ over $D$ defined in \eqref{F} is positive in the sense of Nakano.
\end{thm}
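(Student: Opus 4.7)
The plan is to verify the optimal $L^2$-estimate criterion of Theorem \ref{thm:theta-nakano text_intr} for $(\mathcal F, h_{\mathcal F})$ on $D$, taking the auxiliary bundle to be trivial (justified by Remark \ref{rem:reduce to trivial bundle}, since $D$ is pseudoconvex and hence carries smooth strictly psh functions), with candidate Nakano lower bound $\theta := \delta'\,\omega_0 \otimes \mathrm{Id}_{\mathcal F}$ for a suitable $\delta' > 0$, where $\omega_0$ is the standard K\"ahler form on $D \subset \mathbb C^m$. Since Nakano positivity is a pointwise property, it suffices to argue on an arbitrary relatively compact open subset $V \Subset D$; on the compact set $p^{-1}(\overline V)$ one has $\omega \geq c\, p^*\omega_0$ for some $c>0$, so assumption \ref{Z2} gives $i\Theta_{(L,h_L)} \geq \delta'\, p^*\omega_0$ with $\delta' := \delta c$, which is exactly the input required by Theorem \ref{equ}.

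Given any smooth strictly psh $\phi$ on $D$ and any compactly supported smooth $\bar\partial$-closed $(m,1)$-form $f$ on $V$ with values in $\mathcal F$, I use the canonical isomorphism
$$\wedge^{m,q}T^*_D \otimes \mathcal F \,\cong\, p_*\bigl(p^*(\wedge^{0,q}T^*_D) \otimes K_{\cX} \otimes L\bigr)$$
to lift $f$ to a compactly supported smooth $\bar\partial$-closed $(n+m,1)$-form $\tilde f$ on $p^{-1}(V)$ with values in $L$ whose $(0,1)$-component is purely horizontal. Theorem \ref{equ} applied with this $\phi$ produces $\tilde u \in L^2(p^{-1}(V), \wedge^{n+m,0}T^*_{\cX} \otimes L)$ satisfying $\bar\partial \tilde u = \tilde f$ together with the optimal estimate
$$\int_{p^{-1}(V)} |\tilde u|^2_\psi\, e^{-p^*\phi}\, dV_\omega \;\leq\; \int_{p^{-1}(V)} \bigl\langle [\delta' p^*\omega_0 + i\ddbar p^*\phi,\, \Lambda_\omega]^{-1} \tilde f,\, \tilde f\bigr\rangle_\psi\, e^{-p^*\phi}\, dV_\omega.$$
Because $\tilde f$ has no vertical $(0,1)$-part, $\tilde u$ is automatically fiberwise holomorphic, hence descends to a smooth $(m,0)$-form $u$ on $V$ valued in $\mathcal F$ solving $\bar\partial u = f$; and since $|\tilde u|^2_\psi\, dV_\omega = c_{n+m}\,\tilde u \wedge \overline{\tilde u}\, e^{-\psi}$ is $\omega$-independent on top-degree forms, Fubini identifies the left-hand side with $\int_V \|u\|^2_{h_{\mathcal F}}\, e^{-\phi}\, dV_{\omega_0}$.

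The heart of the argument, and the main obstacle, is the pointwise-plus-fiber-integration identity
$$\int_{p^{-1}(V)} \bigl\langle [\delta' p^*\omega_0 + i\ddbar p^*\phi, \Lambda_\omega]^{-1} \tilde f, \tilde f\bigr\rangle_\psi\, e^{-p^*\phi}\, dV_\omega \;=\; \int_V \bigl\langle [\delta'\omega_0 + i\ddbar \phi, \Lambda_{\omega_0}]^{-1} f, f\bigr\rangle_{h_{\mathcal F}}\, e^{-\phi}\, dV_{\omega_0}.$$
I would prove it in two moves: first, by Lemma \ref{operator}(2) the integrand on the left is independent of the K\"ahler form $\omega$ used on $\cX$ (since $\tilde f$ is of top holomorphic bidegree $(n+m,1)$), so I may compute with a locally split metric $\omega = p^*\omega_0 + \omega_{\rm fib}$; second, with such a split metric the purely horizontal $(1,1)$-form $\delta' p^*\omega_0 + i\ddbar p^*\phi$ commutes with the vertical contraction $\Lambda_{\omega_{\rm fib}}$, so $[\,\cdot\,, \Lambda_\omega] = [\,\cdot\,, \Lambda_{p^*\omega_0}]$, and on the vertically-top-degree form $\tilde f = V \wedge H$ (with $V$ the vertical $(n,0)$-part encoding the section of $\mathcal F$ and $H$ the horizontal $(m,1)$-part) this operator reduces to $V \wedge [\delta'\omega_0 + i\ddbar\phi, \Lambda_{\omega_0}]^{-1} H$, with the fiberwise $L^2$-pairing of $V$ against itself integrating to $\|\cdot\|^2_{h_{\mathcal F}}$. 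Combining this identity with the estimate above yields the base optimal $L^2$-estimate, and Theorem \ref{thm:theta-nakano text_intr} then concludes $i\Theta_{(\mathcal F, h_{\mathcal F})} \geq \delta'\,\omega_0 \otimes \mathrm{Id}_{\mathcal F}$ in the Nakano sense on each such $V$, hence strict Nakano positivity on all of $D$.
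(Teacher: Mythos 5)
Your proposal is correct and follows essentially the same route as the paper's: verify the optimal $L^2$-estimate condition of Theorem \ref{thm:theta-nakano text_intr} (with trivial auxiliary bundle via Remark \ref{rem:reduce to trivial bundle}), lift a test $(m,1)$-form to a $\bar\partial$-closed $(n+m,1)$-form on $\cX$ with purely horizontal $(0,1)$-part, solve $\bar\partial\tilde u=\tilde f$ by Theorem \ref{equ}, use Lemma \ref{operator}(2) to pass to a split K\"ahler metric and reduce both sides by Fubini to integrals over $D$. Your explicit restriction to $V\Subset D$, using properness of $p$ to obtain $\omega\geq c\,p^*\omega_0$ on $p^{-1}(\overline V)$ and hence $i\Theta_{(L,h_L)}\geq\delta' p^*\omega_0$ there, is a small but welcome tightening: the paper passes from assumption \ref{Z2} (a lower bound in terms of $\omega$) directly to the $p^*\omega_0$-lower bound needed in Theorem \ref{equ} without remark, and since Nakano positivity is a pointwise condition your local reduction legitimately supplies the missing constant.
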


\begin{proof}
According to Theorem \ref{thm:theta-nakano text_intr}, it suffices to prove that $(\mathcal F = p_*(K_{\cX/D} + L), \|\cdot\|_{h_{\mathcal F}})$
satisfies the optimal $L^2$-estimate condition with the standard K\"ahler metric $\omega_0$ and some $\theta = \delta \cdot\omega_0 \otimes e = \delta \cdot\omega_0$ on $D \subset\mc^n$, here $\delta>0$ and $e= \rm Id_E \in \Gamma(D, End(E))$. Let $\omega$ be a K\"ahler metric on $\cX$.

Let $f$ be a $\bar\partial$-closed compact supported smooth $(m,1)$-form with values in $\mathcal F$, and let $\phi$ be any smooth strictly plurisubharmonic function on $D$. We can write $f(t)=dt\wedge(f_1(t)d\bar t_1+\cdots +f_n(t)d\bar t_n)$, with each $f_i(t)\in \mathcal F_t=H^0(X_t, K_{X_t}\otimes L)$. One can identify $f$ as a smooth compact supported $(n+m,1)$-form $\tilde f(t,z):=dt \wedge (f_1(t,z)d\bar t_1+\cdots+f_n(t,z)d\bar t_n)$ on $\cX$,
with $f_i(t,z)$ being holomorphic section of $K_{X_t}\otimes L|_{X_t}$.
We have two observations as follows, the first is that $\dbar_z f_i(t,z)=0$ for any fixed $t\in D$, since  $f_i(t,z)$ are holomorphic sections $K_{X_t}\otimes L|_{X_t}$. The second is that $\dbar_tf=0$, since $f$ is a $\bar\partial$-closed form on $D$.
It follows that $\tilde f$ is a $\bar\partial$-closed compact supported  $(n+m,1)$-form on $\cX$ with values in $L$.
We want to solve the equation $\bar\partial\tilde u=\tilde f$ on $X$ by using Theorem \ref{equ}.
Now we equipped $L$ with the metric $\tilde h:=he^{-p^*\phi}$,
then $i\Theta_{(L,\tilde h)}=i\Theta_{(L,h)} + i\ddbar p^*\phi \geq \delta p^*\omega_0 + i\ddbar\ p^*\phi$, which is also positive in the sense of currents. Hence there is  $\tilde u\in \wedge^{m+n,0}T^{\ast}_{\cX}\otimes L$, such that $\bar\partial\tilde u=\tilde f$, and satisfies the following estimate
\begin{align}\label{eqn: optimal L2 estimate 1}
\int_{\cX} c_{m+n}\tilde u\wedge \bar{\tilde u}e^{-\psi -p^*\phi} =& \int_{\cX} |\tilde u|^2 e^{-\psi-p^*\phi}dV_{\omega}  \notag\\
\leq &\int_{\cX}\langle [\delta p^*\omega_0 +i\partial\bar\partial p^*\phi, \Lambda_{\omega}]^{-1}\tilde f,\tilde f\rangle e^{-\psi-p^*\phi}dV_\omega \notag\\
=& \int_{\cX}\langle [\delta p^*\omega_0 + i\partial\bar\partial p^*\phi, \Lambda_{\omega'}]^{-1}\tilde f,\tilde f\rangle e^{-\psi-p^*\phi}dV_{\omega'} \notag\\
=&\int_D \langle [ \delta \omega_0 + i\partial\bar\partial \phi, \Lambda_{\omega_0}]^{-1}f,f\rangle_te^{-\phi}dV_{\omega_0}.
\end{align}
The first inequality due to \eqref{eq-optimal}, the second equality holds because $\tilde f$ is $(n+m,1)$-form, and therefore $\langle [\delta p^*\omega_0 + i\partial\bar\partial p^*\phi, \Lambda_{\omega}]^{-1}\tilde f,\tilde f\rangle dV_\omega$ are independent to $\omega$ in view of Lemma \ref{operator}. The last equality is valid because here we choose $\omega'= i \Sigma_{j=1}^m dt_j \wedge d\bar t_j + i \Sigma_{j=1}^n dz_j \wedge d\bar z_j$. The notation $\langle\cdot,\cdot\rangle_t$ here means a pointwise inner product concerning the Hermitian metric $\|\cdot\|_{h_{\mathcal F}}$ on $\mathcal F$.

Set $\tilde u_t:=\tilde u(t,\cdot)$, we observe that $\dbar \tilde u_t=0$ for any fixed $t\in D$, since $\bar\partial\tilde u=\tilde f$ and the $(n+m,1)$-form $\tilde f$ contains only the terms of $d\bar t_i$.
This means that $\tilde u_t \in \mathcal F_t$, and hence we may view $\tilde u$ as a section $u$ of $\mathcal F$.
It is that $\bar\partial u=f$.
Due to Fubini's theorem, we have that
\begin{align}\label{eqn: optimal L2 estimate 2}
\int_{\cX} c_{m+n}\tilde u\wedge \bar{\tilde u}e^{-\psi-p^*\phi} = \int_D \|u_t\|_{h_{\mathcal F}}^2e^{-\phi}dV_{\omega_0}.
\end{align}
Combining \eqref{eqn: optimal L2 estimate 1} with \eqref{eqn: optimal L2 estimate 2}, we obtain
\begin{align*}
\int_D \|u_t\|_{h_{\mathcal F}}^2e^{-\phi} dV_{\omega_0} \leq \int_D \langle [ \delta \omega_0 + i\partial\bar\partial \phi, \Lambda_{\omega_0}]^{-1}f,f\rangle_t e^{-\phi}dV_{\omega_0}.
\end{align*}
So we have proved that $\mathcal F$ satisfies the optimal $L^2$-estimate condition, thus owing to Theorem \ref{thm:theta-nakano text_intr} (and Remark \ref{rem:reduce to trivial bundle}), one get $\xu\Theta_{(\mathcal F, h_{\mathcal F})} \geq \delta\cdot \omega_0$. This is to say, $(\mathcal F, h_{\mathcal F})$ is positive in the sense of Nakano.
\end{proof}

Finally, we prove Corollary \ref{ko-oh} using the following remark.

\begin{rem}[Remark on the Nadel--Nakano vanishing theorem]
In general, the Nakano positivity of a vector bundle implies the vanishing of corresponding cohomologies, even in the presence of a singular metric. The induced singular Hermitian metric on the direct image of the adjoint nef and big line bundle gives a concrete example of the Nakano positivity of singular Hermitian metrics in the sense of \cite{Iwa21}.
In M. Iwai's paper \cite[Theorem $1.2$, Theorem $1.3$]{Iwa21}, the author proposes the following three conditions which imply coherence and vanishing. Let $(E, h)$ be a holomorphic vector bundle on $X$ with a singular Hermitian metric.
\begin{enumerate}
\item \label{11} There exists a proper analytic subset $Z$ such that $h$ is smooth on $X\backslash Z$.
\item \label{22} the Hermitain metric $he^{-\zeta}$ on $E$ is a Griffiths semi-positive for some continuous function $\zeta$ on $X$.
\item \label{33} There exists a real number $C$ such that $\xu\Theta_{E,h}- C\omega\otimes \rm Id_E\geq0$ on $X\backslash Z$ in the sense of Nakano.
\end{enumerate}
When the $C$ in the above condition \eqref{33} is just a real number, then the corresponding sheaf $\mathcal{E}(E,h)$ is coherent. If moreover, the $C$ is a positive real number, then we have the following vanishing:
$$
H^q(X, K_X\otimes\mathcal{E}(E,h)) =0
$$
for any $q\geq 1$. Here $\mathcal{E}(E, h)$ is the $L^2$-subsheaf defined by $\mathcal{E}(E, h)_x:= \{s_x \in \mathcal{O}(E)_x : |s|^2_{h}\,\, \text{is locally integrable around x} \}$.
\end{rem}

\begin{proof}[Proof of Corollary \ref{ko-oh}]
    From Theorem \ref{semi-po} and Lemma \ref{loc}, the holomorphic vector bundle $(\mathcal{F}, h_{\mathcal{F}})$ satisfies the aforementioned three conditions. 
    Then we have that 
    \begin{align*}
        H^q(Y,K_Y\otimes\mathcal{E}(\mathcal{F},h_{\mathcal{F}}))=0
    \end{align*}
    for any $q>0$. 
    
    Thus, it suffices that we show that $\mathcal{E}(\mathcal{F},h_{\mathcal{F}})=\mathcal{O}_Y(\mathcal{F})$.
    Here, we may assume that $h_L=e^{-\phi_L}$ can be taken to satisfy the Lelong number $\nu(\phi_L,x)<1$ for any $x\in X$ from nef and big (see \cite[Corollary\,6.19]{Dem10}).
    For any $t\in Y$ and any $s\in \mathcal{F}_t$, we take a small neighborhood $U$ of $t$ satisfying that $K_Y|_U$ is trivial. 
    The Lebesgue measure of a set $Z_f:=\{t\in Y\mid t\,\, \text{is not a regular point of}\,\,f\}$ 
    is zero and $f$ is submersion on $Y\setminus Z_f$.
    By the relation $H^0(B,\mathcal{F})\cong H^0(B,K_Y\otimes\mathcal{F})=H^0(f^{-1}(U),K_X\otimes L):s=s\,dt\mapsto \widetilde{s}$ and Fubini's theorem, we have that 
    \begin{align*}
        \int_U||s||^2_{h_{\mathcal{F}}}dV_U=\int_{U\setminus Z_f}||s||^2_{h_{\mathcal{F}}}dV_U=\int_{f^{-1}(U\setminus Z_f)}c_{m+n}\widetilde{s}\wedge\overline{\widetilde{s}} e^{-\phi_L},
    \end{align*}
    where set $\widetilde{s}_t:=\widetilde{s}(t,\cdot)$ then $\widetilde{s}_t\in\mathcal{F}_t$.

    From Skoda's results (see \cite{Sko72}) and Lelong number conditions, the integral of $e^{-\phi_L}$ on a small neighborhood of any point in $X$ is finite.
    By compact-ness of $X_t$ and holomorphicity of $s_t$, we get $\max_{x\in X_t}|s_t(x)|^2<+\infty$ and $\sup_{x\in f^{-1}(U)}c_{m+n}\widetilde{s}\wedge\overline{\widetilde{s}}<+\infty$.
    Hance, we have that the integrable of $||s||^2_{h_{\mathcal{F}}}$ on $f^{-1}(U)$ is finite. 
\end{proof}

\end{document}